\newtheorem {thm}{Theorem}
\newtheorem {cor}[thm]{Corollary}
\newtheorem {lem}[thm]{Lemma}
\newtheorem {prop}[thm]{Proposition}
\theoremstyle{definition}
\newtheorem {defi}[thm]{Definition}
\newtheorem {rem}[thm]{Remark}
\DeclareMathOperator{\Gal}{Gal}
\newcommand{\Q}{\mathbb{Q}}
\newcommand{\ch}{\mathrm{char}}
\renewcommand{\geq}{\geqslant}
\renewcommand{\leq}{\leqslant}
\title{Linear relations among radicals}
\author{Antonella Perucca}
\date{}
\keywords{Kneser's theorem, Kummer theory, radicals, entanglement.}
\subjclass[2000]{Primary: 12J99; Secondary: 11R18.}
\begin{document}

\maketitle

\begin{center}
     \textit{In memory of Marc Rybowicz}
\end{center}

\begin{abstract}
Let $K$ be a field, fix an algebraic closure $\overline{K}$, and let $G$ be a subgroup of $\overline{K}^\times$. We are able to give a closed formula for the ratio between the degree $[K(G):K]$ and the index $|GK^\times:K^\times|$, provided that the latter is finite. Our formula  explains all the $K$-linear relations among radicals, which (beyond the ones stemming from the multiplicative group $GK^\times/K^\times$) are generated by relations among roots of unity and single radicals. Our work builds on results by Rybowicz, which in turn are based on work by Kneser and Schinzel. 
\end{abstract}

\section{Introduction}

We let $K$ be a field, for which we fix an algebraic closure $\overline{K}$. We consider the \emph{radicals} over $K$, by which we mean the elements $\alpha\in \overline{K}^\times$ for which there exists a positive integer $n$ -- coprime to the characteristic of $K$ -- such that $\alpha^n\in K^\times$. 
We fix a group $G$ of radicals such that the index $|GK^\times :K^\times|$ is finite. We investigate the degree $[K(G):K]$ (the extension $K(G)/K$ is separable, but in general not Galois). More precisely, we compare the above degree and  index.
If all $K$-linear relations among the radicals in $G$ stem from multiplicative relations, then the above-mentioned degree and index will be the same. Else, we have a phenomenon that is called \emph{entanglement} (of radicals). We denote by $n$ the smallest positive integer such that $G^n\subseteq K^\times$ and by $z$ the product of the odd prime divisors $p$ of $n$ such that $\zeta_p\notin K^\times$ and $\zeta_p\in GK^\times$. If $H$ is a multiplicative group and $m$ is a positive integer, we write $\mu_m(H)$ for the subgroup of $H$ consisting of roots of unity of order dividing $m$. The main result of this paper is the following:

\begin{thm}\label{mainthm}
If $n$ is odd, we have
$$\frac{[K(G):K]}{|GK^\times:K^\times|}=\frac{[K(\zeta_{z}):K]}{|\mu_{n}(G K^\times)\cap K(\zeta_{z})^\times: \mu_{n}(K^\times)|}\,.$$
If $n$ is even, writing $n=2^f n'$ where $f$ is a positive integer and $n'$ is an odd integer, the ratio $\frac{[K(G):K]}{|GK^\times:K^\times|}$ equals 
$$\frac{[K(\zeta_{z}):K] \cdot 2^{-\Delta}}{|\mu_{n'}(G K^\times)\cap K(\zeta_{z})^\times: \mu_{n'}( K^\times)|\cdot |\mu_{2^{f+1}}(G K^\times)(G K^\times\cap\sqrt{K^\times})\cap K(\zeta_z)^\times: K^\times|}$$
where $\Delta$ is the non-negative integer from Definition \ref{Delta}.
\end{thm}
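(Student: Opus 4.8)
The plan is to reduce everything to Kneser's theorem by climbing a cyclotomic tower that captures exactly the entangling roots of unity. Set $L = K(\zeta_z)$. By the definition of $z$, every prime $p \mid z$ satisfies $\zeta_p \in GK^\times \subseteq K(G)$, so $\zeta_z \in K(G)$ and hence $L \subseteq K(G)$. Consequently $K(G) = L(G)$, and the degree factors as
$$[K(G):K] = [L(G):L]\cdot[L:K] = [L(G):L]\cdot[K(\zeta_z):K].$$
It therefore suffices to compute $[L(G):L]$ and to compare the indices $|GL^\times:L^\times|$ and $|GK^\times:K^\times|$.

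First I would verify that Kneser's criterion holds over $L$, so that $[L(G):L] = |GL^\times:L^\times|$ (this is the odd case; in the even case one also adjoins $\zeta_4$ and tracks the prime $2$ separately). Concretely, for an odd prime $q \mid n$ with $\zeta_q \in GL^\times$, one must show $\zeta_q \in L$: if $q \mid z$ this is immediate, while if $q \nmid z$ then by the definition of $z$ either $\zeta_q \in K \subseteq L$, or $\zeta_q \notin GK^\times$, and in the latter situation I would argue that adjoining the coprime roots of unity $\zeta_z$ cannot produce $\zeta_q$ in $GL^\times$. This descent is where a Schinzel--Kneser-type lemma from the earlier sections is invoked.

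Next, comparing indices: the inclusion $GK^\times \hookrightarrow GL^\times$ induces a surjection $GK^\times/K^\times \twoheadrightarrow GL^\times/L^\times$ whose kernel is $(GK^\times \cap L^\times)/K^\times$, giving
$$\frac{|GL^\times:L^\times|}{|GK^\times:K^\times|} = \frac{1}{|(GK^\times \cap L^\times):K^\times|}\,.$$
It remains to identify $(GK^\times \cap L^\times)/K^\times$ with $\mu_n(GK^\times)\cap K(\zeta_z)^\times/\mu_n(K^\times)$. The injection from the root-of-unity quotient is clear; for surjectivity I would show that any $\beta \in GK^\times \cap L^\times$ is, modulo $K^\times$, a root of unity of order dividing $n$: indeed $\beta^n \in K^\times$ and $K(\beta)/K$ is abelian, being contained in the cyclotomic extension $L/K$, and a radical generating an abelian extension agrees with a root of unity up to a factor in $K^\times$. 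Assembling the three displays yields the odd-$n$ formula.

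For even $n = 2^f n'$, the odd part $n'$ is treated exactly as above, producing the factors $[K(\zeta_z):K]$ (with $z$ odd) and $|\mu_{n'}(GK^\times)\cap K(\zeta_z)^\times:\mu_{n'}(K^\times)|$. The prime $2$, however, must be handled by a dedicated analysis, since Kneser's condition at $2$ fails in more intricate ways, through $\zeta_4$, roots of unity of $2$-power order, and relations such as those among $\sqrt{a}$, $\sqrt{b}$, $\sqrt{ab}$; this is the source of the correction $2^{-\Delta}$ and of the term involving $\mu_{2^{f+1}}(GK^\times)$ and $GK^\times \cap \sqrt{K^\times}$. I expect the genuine obstacle to lie exactly here: controlling the $2$-primary entanglement and proving the descent lemma that the coprime cyclotomic extension $K(\zeta_z)$ introduces no new square roots nor roots of unity of $2$-power order, so that the $2$-part and the odd part of the defect multiply cleanly to give the stated product.
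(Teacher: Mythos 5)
Your treatment of odd $n$ is essentially the paper's own argument: base change to $L=K(\zeta_z)$, Kneser's theorem over $L$, and Schinzel's Theorem \ref{Schinzel-abelian} to identify the kernel $(GK^\times\cap L^\times)/K^\times$ with $(\mu_n(GK^\times)\cap L^\times)/\mu_n(K^\times)$; the paper merely runs this prime by prime via the groups $G_p=G^{n/n_p}$ (Remark \ref{split}) instead of applying Kneser to all of $G$ at once, a cosmetic difference. The descent step you flag (for an odd prime $q\nmid z$ with $\zeta_q\notin GK^\times$, showing that $\zeta_q\in L^\times$ or $\zeta_q\notin GL^\times$) is indeed filled by exactly the tool you name: if $\zeta_q=g\lambda$ with $g\in G$, $\lambda\in L^\times$, then $g$ lies in the abelian extension $L(\zeta_q)/K$ and $g^n\in K^\times$, so Theorem \ref{Schinzel-abelian} (with $m=1$, since $\zeta_p\notin K^\times$ for the relevant odd $p$) forces $g$ to be a root of unity of order dividing $n$ modulo $K^\times$, whence either $\zeta_q\in GK^\times$ (excluded) or $\zeta_q\in L^\times$ (the allowed alternative in Kneser's hypothesis). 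So the odd half of your proposal is sound and matches the paper.

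The even half, however, is not proved: you describe the expected shape of the answer and explicitly defer the $2$-adic analysis, and that is precisely where the content of the theorem lies. Three concrete steps are missing. First, the clean multiplicativity of the $2$-part and the odd part of the defect: the paper obtains this from Remark \ref{split}, where linear disjointness of the fields $K(\zeta_z,G_p)$ over $K(\zeta_z)$ holds because each relative degree is a power of the corresponding prime $p$ (Corollary \ref{corbetterp} for odd $p$, Theorem \ref{better2} with Lemma \ref{lembetter2} for $p=2$); you assert it without argument. Second, the denominator term $|\mu_{2^{f+1}}(GK^\times)(GK^\times\cap\sqrt{K^\times})\cap K(\zeta_z)^\times:K^\times|$: your surjectivity argument from the odd case (``a radical generating an abelian extension is a root of unity times an element of $K^\times$'') \emph{fails} at $p=2$, because in Theorem \ref{Schinzel-abelian} the admissible exponent $m$ with $\zeta_m\in K$ can now be $2$ (as $-1\in K^\times$ while $\zeta_4\notin K^\times$); the $m=2$ case is exactly what produces elements of $GK^\times\cap\sqrt{K^\times}$ and roots of unity of order $2^{f+1}$ rather than $2^f$, and one must prove the identity $G_2K^\times\cap K(\zeta_z)^\times=\mu_{2^{f+1}}(G_2K^\times)(G_2K^\times\cap\sqrt{K^\times})\cap K(\zeta_z)^\times$ that the paper derives from this. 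Third, the identification of the residual defect $[K(\zeta_z,G_2):K(\zeta_z)]/|G_2K(\zeta_z)^\times:K(\zeta_z)^\times|$ with $2^{-\Delta}$: given Definition \ref{Delta} this is an application of Theorem \ref{better2} and \eqref{magicDelta} to $G_2=G^{n'}$ over the base $K(\zeta_z)$, resting on Rybowicz's Theorem \ref{Ryb2} and Lemmas \ref{lemminoSchinzel} and \ref{orders}; none of this apparatus appears in your proposal. In short, you have a correct proof of the odd case and a plan, not a proof, for the even case.
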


This result allows to completely understand the $K$-linear relations among the radicals in $GK^\times$: for $p$ an odd prime such that $\zeta_p\in GK^\times$ and $\zeta_p\notin K^\times$, calling $d_p=[K(\zeta_p):K]$, we have 
$$\zeta_p^{d_p}, \ldots, \zeta_{p}^{p-1}\in 1K+\zeta_p K+\ldots+ \zeta_p^{d_p-1} K=K(\zeta_p)\,.$$
Moreover, the fact that certain powers of $\zeta_{p^{v_p(n)}}$ of order larger than $p$ (and contained in $GK^\times$) may be contained in $K(\zeta_p)^\times$ leads to further $K$-linear relations among the roots of unity.

Beyond this phenomenon, the elements in $\mu_{n'}(G K^\times)\cap K(\zeta_{z})^\times$
are powers of $\zeta_{n'}$ and also $K$-linear combinations of powers of $\zeta_{z}$,  and equating the two expressions gives rise to a $K$-linear relation among roots of unity. Similarly, the elements in 
$\mu_{2^{f+1}}(G K^\times)(GK^\times\cap\sqrt{K^\times})\cap K(\zeta_z)^\times$ provide $K$-linear relations between single elements of $G$ whose square is in $K^\times$ and powers of $\zeta_{2^{f+1}z}$. Finally, as explained in Section \ref{sec:better2}, the term $2^{-\Delta}$ stems from $K$-linear relations among roots of unity of order dividing $2^{f+1}$, and possibly an additional relation
$$1+\zeta_{2^w}\in 1K+\zeta_4K$$
where $w$ is largest integer such that $\zeta_{2^w}+\zeta_{2^w}^{-1}\in K^\times$ (provided that such largest integer exists).

From the main result we deduce the following general property:

\begin{thm}\label{divisibility}
The degree $[K(G):K]$ divides 
$$\frac{1}{z} \cdot [K(\zeta_z):K]  \cdot |GK^{\times}:K^{\times}|\,.$$
\end{thm}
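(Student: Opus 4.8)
The plan is to derive Theorem~\ref{divisibility} directly from the main result (Theorem~\ref{mainthm}), treating it as a corollary. The key observation is that the claimed divisibility is equivalent to showing that the ratio
$$R := \frac{[K(G):K]}{|GK^\times:K^\times|}$$
satisfies $R \cdot z \mid [K(\zeta_z):K]$, i.e. that $z \cdot R$ is an integer dividing $[K(\zeta_z):K]$. Both formulas in Theorem~\ref{mainthm} express $R$ as $[K(\zeta_z):K]$ divided by some positive quantity (and, in the even case, multiplied by $2^{-\Delta}$), so it will suffice to show that this denominator, divided by $z$, is itself a positive integer; equivalently, that $z$ divides the relevant index in the denominator.

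First I would treat the odd case. Here $R = [K(\zeta_z):K] / I$ where $I = |\mu_n(GK^\times)\cap K(\zeta_z)^\times : \mu_n(K^\times)|$. The claim reduces to showing $z \mid I$. The point is that $z$ is the product of the odd primes $p \mid n$ with $\zeta_p \notin K^\times$ but $\zeta_p \in GK^\times$. For each such $p$ we have $\zeta_p \in \mu_n(GK^\times) \cap K(\zeta_z)^\times$ but $\zeta_p \notin \mu_n(K^\times)$, so each such $\zeta_p$ contributes a factor of $p$ to the index $I$. Since these primes are distinct and the corresponding cyclic subgroups $\langle \zeta_p \rangle$ generate a group of order $z = \prod p$ inside $\mu_n(GK^\times)\cap K(\zeta_z)^\times$ whose image in the quotient by $\mu_n(K^\times)$ has order exactly $z$ (as $\mu_n(K^\times)$ contains no nontrivial $p$-th roots of unity for these $p$), I conclude $z \mid I$. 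Dividing, $z \cdot R = z[K(\zeta_z):K]/I$ is $[K(\zeta_z):K]$ times the integer $z/\gcd$-type factor, and in fact $R \cdot z \mid [K(\zeta_z):K]$.

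For the even case, the same argument applies to the odd part: $z$ is still a product of odd primes, and the odd-primary roots of unity $\zeta_p$ (for $p \mid z$) lie in $\mu_{n'}(GK^\times)\cap K(\zeta_z)^\times$ but not in $\mu_{n'}(K^\times)$, so $z$ divides the first index factor $I' = |\mu_{n'}(GK^\times)\cap K(\zeta_z)^\times : \mu_{n'}(K^\times)|$ by the identical reasoning. The remaining factors in the even denominator---the $2$-primary index and the term $2^{-\Delta}$---only affect powers of $2$ and are coprime to the odd number $z$, so they do not interfere with divisibility by $z$; I need only check that the full denominator times $R$ still divides $[K(\zeta_z):K]$, which follows once I verify that $2^{-\Delta}$ times the $2$-primary index is a positive rational whose product with $[K(\zeta_z):K]/I'$ keeps $z \cdot R$ an integer dividing $[K(\zeta_z):K]$.

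The main obstacle I anticipate is the even case bookkeeping: I must confirm that the factor $2^{-\Delta}$ does not spoil integrality, i.e. that $2^\Delta$ always divides the product of the two denominator indices so that $R$ remains a rational with the claimed divisibility property, and that the $2$-power contributions are genuinely coprime to $z$ and hence irrelevant to divisibility by the odd integer $z$. This requires knowing the precise bound on $\Delta$ from Definition~\ref{Delta} relative to the $2$-primary index $|\mu_{2^{f+1}}(GK^\times)(GK^\times\cap\sqrt{K^\times})\cap K(\zeta_z)^\times : K^\times|$; the odd-prime argument giving $z \mid I'$ is the clean, essential content, while the $2$-adic estimate is the delicate part that the main theorem's construction of $\Delta$ should supply.
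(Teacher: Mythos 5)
Your route---deducing Theorem~\ref{divisibility} directly from the statement of Theorem~\ref{mainthm}---is genuinely different from the paper's proof, which never invokes the global formula: the paper first observes that (since $\zeta_z\in GK^\times$) the claim is equivalent to $[K(\zeta_z,G):K(\zeta_z)]$ dividing $\frac{1}{z}\cdot|GK^\times:K^\times|$, then splits both sides over the primes $p\mid n$ via Remark~\ref{split}, and settles each factor with Corollary~\ref{corbetterp} (for $p\mid z$, where the local ratio carries at least one factor $p^{-1}$), Kneser's Theorem~\ref{Kneserthm} (for the remaining primes in the tame cases), and Theorem~\ref{better2} with \eqref{magicDelta} (for $p=2$, $\zeta_4\notin K^\times$, $4\mid n$). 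Your reduction is legitimate and shorter, and your odd case is complete and correct: $\langle\zeta_z\rangle\subseteq\mu_n(GK^\times)\cap K(\zeta_z)^\times$ meets $\mu_n(K^\times)$ trivially (as $z$ is squarefree and $\zeta_p\notin K^\times$ for every $p\mid z$), so $z\mid I$ and the quotient to be shown integral equals $I/z$. One caution: your parenthetical claim that $z\cdot R$ is itself an \emph{integer} is false in general---take $K=\Q(\zeta_9+\zeta_9^{-1})$ and $G=\langle\zeta_9\rangle$, where $n=9$, $z=3$, $I=9$, $R=2/9$ and $zR=2/3$; only the integrality of $[K(\zeta_z):K]/(zR)$ is needed, and that is what your argument actually delivers.

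The genuine defect is the even case, which you leave unfinished on the grounds that one needs ``the precise bound on $\Delta$'' relative to the $2$-primary index. No such bound exists or is needed: you have mis-tracked the direction in which $2^{-\Delta}$ enters. In Theorem~\ref{mainthm} it multiplies the \emph{numerator} of $R$, so in the quotient to be shown integral it appears with exponent $+\Delta$. Explicitly, writing $I'=|\mu_{n'}(GK^\times)\cap K(\zeta_z)^\times:\mu_{n'}(K^\times)|$ and $J=|\mu_{2^{f+1}}(GK^\times)(GK^\times\cap\sqrt{K^\times})\cap K(\zeta_z)^\times:K^\times|$, one gets $[K(\zeta_z):K]/(zR)=I'J\,2^{\Delta}/z$. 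Since $z$ is odd and $z\mid n'$, your own odd-prime argument gives $z\mid I'$; moreover $J$ is a finite group index, hence a positive integer, and $\Delta\geq 0$ is part of the statement of Theorem~\ref{mainthm} (Definition~\ref{Delta}). So the even case is a one-line consequence of ingredients you already had in hand, and the ``delicate $2$-adic estimate'' you defer to is vacuous. With this correction (and the repaired phrasing about $zR$), your proposal becomes a complete and valid alternative to the paper's prime-by-prime proof.
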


We conclude the paper by investigating the growth of radical extensions (see Theorems \ref{eventual} and \ref{MAMA}). 

We suppose that $n$ is coprime to the characteristic of $K$ but this assumption is not necessary. Indeed, if $\ch(K)=p$, the extension $K(G)/K(G^{p^{v_p(n)}})$ is purely inseparable and with degree the $p$-part of $|GK^\times: K|$ (see \cite[Corollary 9.2, Chapter VI]{LangAlgebra}). We deduce that
$$\frac{[K(G):K]}{|GK^\times:K^\times|}=\frac{[K(G^{p^{v_p(n)}}):K]}{|G^{p^{v_p(n)}}K^\times:K^\times|}$$
so we have reduced to the case where $n$ is coprime to $p$.

We also remark that to study the $K$-linear relations of (finitely many) radicals we may consider the group $G$ that they generate, so our assumption that $|GK^\times:K^\times|$ is finite is not restrictive.

Our results build on two theorems by Rybowicz (Theorems \ref{thmKR} and \ref{Ryb2}) which express the degree $[K(G):K]$ also in the cases in which the famous theorem by Kneser (Theorem \ref{Kneserthm}) cannot be applied. Other results that we make use of is a lemma by Schinzel about radicals to extend the base field to $K(\zeta_4)$ (Lemma \ref{lemminoSchinzel}) and Schinzel's Theorem on abelian radical extensions (Theorem \ref{Schinzel-abelian}).
There is a vast literature on radical extensions however our general result seem to be new. We mention for example also \cite{Koch2} by 
Halter-Koch, \cite{BarreraVelez} by Barrera Mora and Vélez, and  \cite{Albu} by Albu. In \cite{Lenstra} Lenstra investigated entanglements introducing the entanglement group: this group was studied also by Palenstijn \cite{Palenstijn} and by the author with Sgobba and Tronto \cite{MAMA}. The entanglement has also been studied by Lenstra, Moree and Stevenhagen in \cite{LMS}. Recently, the author with Chan, Pajaziti, and Perissinotto established further results on the entanglement, see \cite{EntRad}.

\subsection*{Acknowledgments} We thank Daniel Gil-Munoz for discussions which also lead to a special case of the main theorem, Zeev Rudnick for a general discussion about radical extensions, and Fritz Hörmann for many useful comments.

\section{Kneser's Theorem and Kummer theory}\label{sec:KneserKummer}

We let $K$ be a field and fix an algebraic closure $\overline{K}$. We  let $G$ be a group of radicals over $K$ such that the index $|G K^\times :K^\times|$ is finite and coprime to the characteristic of $K$. We let $n$ be a positive integer, which we  suppose to be minimal, such that $G^n$ is contained in $K^\times$ (thus, $n$ is coprime to the characteristic of $K$). 

For any positive integer $m$ that is coprime to $\ch(K)$ we fix some root of unity $\zeta_m$ in $\overline{K}^\times$ of order $m$ (with a coherent choice, namely that if $m,M$ are positive integers such that $m\mid M$ then we have $\zeta_M^{M/m}=\zeta_m$). 
If $H$ is a subgroup of $\overline{K}^\times$ and $m$ is a positive integer, we write $\mu_m(H)$ for the group of roots of unity in $H$ whose order divides $m$.

\begin{rem}\label{remexact}
The following sequence, induced by the exponentiation by $n$, is exact:
\begin{equation}\label{exact}
1 \rightarrow \mu_n(GK^\times)K^\times/K^\times \rightarrow GK^\times/K^\times \rightarrow G^nK^{\times n}/K^{\times n} \rightarrow 1\,.
\end{equation}
Indeed, if $(ga)^n=b^n$ for some $g\in G$ and for some $a,b\in K^\times$, then $g^n\in K^{\times n}$ and hence $g\in \mu_n(GK^\times)K^\times$.
Moreover, the following sequence is exact 
\begin{equation*}\label{exactroot}
1 \rightarrow \mu_n(K^\times) \rightarrow \mu_n(GK^\times) \rightarrow \mu_n(GK^\times)K^\times/K^\times \rightarrow 1
\end{equation*}
because we have $\mu_n(K^\times)=\mu_n(GK^\times)\cap K^\times$.
We deduce that 
\begin{equation}\label{exactall}
|GK^\times:K^\times|=|G^nK^{\times n}:K^{\times n}|\cdot | \mu_n(GK^\times): \mu_n(K^\times)|\,.
\end{equation}
\end{rem}

We rely on the famous result by Martin Kneser from \cite{Kneser}:

\begin{thm}[Kneser's Theorem]\label{Kneserthm}
We have 
$$[K(G):K]=|GK^\times:K^\times|$$
if the following two conditions hold: for every odd prime $p$ we have $\zeta_p\in K^\times$ or $\zeta_p\notin GK^\times$; we have $\zeta_4\in K^\times$ or $1\pm \zeta_4\notin G K^\times$. 
\end{thm}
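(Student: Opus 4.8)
The plan is to prove the equality by establishing the two inequalities separately. The bound $[K(G):K]\le |GK^\times:K^\times|$ is immediate: choosing radicals $g_1=1,\dots,g_r$ representing the finitely many cosets of $K^\times$ in $GK^\times$, their images span $K(G)$ over $K$, since a product of radicals is again a radical and each radical is a $K^\times$-multiple of some $g_i$. Hence everything reduces to showing that these coset representatives are $K$-linearly independent, i.e. that there is no nontrivial $K$-linear relation among radicals lying in distinct cosets of $K^\times$.

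The core mechanism is to analyse a putative relation of minimal length together with the action of the absolute Galois group. Suppose, for contradiction, that a nontrivial relation exists, and pick one $\sum_{i=1}^r c_i g_i = 0$ with $c_i\in K^\times$, the $g_i$ radicals in distinct cosets of $K^\times$, and $r\ge 2$ minimal. For $\sigma\in\Gal(K^{\mathrm{sep}}/K)$ (recall that $K(G)/K$ is separable), each quotient $\omega_i(\sigma):=\sigma(g_i)/g_i$ is a root of unity of order dividing $n$, because $g_i^n\in K^\times$ is $\sigma$-fixed. Applying $\sigma$ and subtracting $\omega_1(\sigma)$ times the original relation cancels the first term and yields $\sum_{i=2}^r c_i\bigl(\omega_i(\sigma)-\omega_1(\sigma)\bigr)g_i = 0$, a relation of length $<r$. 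By minimality its coefficients must vanish, forcing $\omega_i(\sigma)=\omega_1(\sigma)$ for all $i$ and all $\sigma$, whence $g_i/g_1\in K^\times$ — contradicting that the cosets are distinct.

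The catch, and the reason the hypotheses are needed, is that the shortened relation has coefficients $c_i(\omega_i-\omega_1)$ in $\overline K$ rather than in $K$, so the minimality argument applies verbatim only when the relevant roots of unity already lie in $K^\times$, i.e. in the Kummer situation $\mu_n\subseteq K^\times$. I would therefore first dispose of that case by the argument above, and then reduce the general case to it by induction on $|GK^\times:K^\times|$: pick a subgroup of prime index $\ell$ in the finite abelian group $GK^\times/K^\times$, with corresponding subfield generated by a subgroup $G'$ (whose hypotheses are inherited, as $G'K^\times\subseteq GK^\times$). By the tower law and the inductive hypothesis applied to $G'$ it then suffices to prove $[K(G):K(G')]=\ell$, i.e. that the single remaining radical $\alpha$ with $\alpha^\ell\in K(G')^\times$ does not already lie in $K(G')$. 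This is exactly an irreducibility statement for $X^\ell-\alpha^\ell$: for odd $\ell$ the binomial is irreducible unless $\alpha^\ell$ is an $\ell$-th power, and the obstruction to controlling this is precisely the presence of $\zeta_\ell\notin K^\times$ with $\zeta_\ell\in GK^\times$, excluded by the first hypothesis; for $\ell=2$ one must additionally control the interaction between consecutive quadratic steps — the classical Capelli phenomenon whereby $X^4-a$ can factor when $-4a\in K^{\times 4}$ — which is governed exactly by whether $\zeta_4$ and $1\pm\zeta_4$ belong to $GK^\times$, excluded by the second hypothesis.

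The main obstacle I anticipate is the bookkeeping of roots of unity across the induction: adjoining part of $G$ can introduce new roots of unity (most notably through the identity $\zeta_8=(1+\zeta_4)/\sqrt2$, which ties the prime $2$ to square radicals) that could a priori violate the hypotheses for the radicals still to be adjoined. The delicate point is to show that the two hypotheses are preserved, in the appropriate form, at each stage of the reduction, and in particular to handle the $2$-primary part where the interplay between $\zeta_{2^k}$, $1\pm\zeta_4$ and square radicals is genuinely subtle; this is the heart of Kneser's argument and the part I would expect to require the most care.
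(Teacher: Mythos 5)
The paper offers no proof of this statement: it is quoted verbatim as Kneser's theorem from the cited article of Kneser (Acta Arith.\ 1975), so there is no internal argument to compare yours against; I am judging the proposal on its own merits.

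On those merits there is a genuine gap, and it sits exactly where you flag it. The upper bound $[K(G):K]\leq|GK^\times:K^\times|$, the Kummer case via the minimal-relation twisting argument (valid there because $\omega_i(\sigma)\in K^\times$, so the shortened relation has coefficients in $K$), and the reduction to prime-degree steps via a chief series of $GK^\times/K^\times$ are all sound and standard. But the entire content of Kneser's theorem is concentrated in the one step you leave open: showing that the prime-degree step does not collapse, i.e.\ that $\alpha\zeta_\ell^j\notin K(G')$ for every $j$ (for prime $\ell$, including $\ell=2$, irreducibility of $X^\ell-\alpha^\ell$ over $K(G')$ is \emph{exactly} the condition $\alpha^\ell\notin K(G')^{\times\ell}$; Capelli's $-4a$ clause concerns degrees divisible by $4$ and never enters a prime-degree step, so invoking it is a red herring). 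Your observation that the hypotheses are ``inherited, as $G'K^\times\subseteq GK^\times$'' only licenses the inductive call for $G'$ over the base field $K$; it says nothing about the field $K(G')$ over which the irreducibility must actually be established, and that field can acquire new roots of unity not visible in $G'K^\times$ --- your own example $\zeta_8=(1+\zeta_4)/\sqrt{2}$ shows how. Ruling out $\alpha\zeta_\ell^j\in K(G')$ requires knowing which radicals and which roots of unity lie in $K(G')$, and any such ``saturation'' statement (e.g.\ that every radical contained in $K(G')$ already lies in $G'K^\times$ up to roots of unity controlled by the hypotheses) is of the same strength as the theorem being proved. As written, the plan reduces Kneser's theorem to its hardest step and then defers it, which makes the argument circular at precisely the point you call ``the heart of Kneser's argument''; a complete proof would need either Kneser's actual inductive control of $\mu_\infty(K(G'))$ along the tower, or the minimal-relation argument pushed through without the assumption $\mu_n\subseteq K^\times$ by a genuine analysis of the coefficients $c_i\left(\omega_i(\sigma)-\omega_1(\sigma)\right)$.
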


Our main result implies the following: 

\begin{prop}
The two conditions in Kneser's theorem are necessary. 
\end{prop}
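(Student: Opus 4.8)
The plan is to prove the strong form of the statement: if either of the two conditions in Theorem \ref{Kneserthm} fails, then the formula of Theorem \ref{mainthm} evaluates to a number strictly smaller than $1$, so that $[K(G):K]<|GK^\times:K^\times|$ and the equality in Kneser's theorem cannot hold; this shows both that the equality forces the two conditions and that neither can be dropped. A preliminary reduction I would record first is that the second condition is automatic when $n$ is odd. Indeed, $\zeta_4$ only exists when $\ch(K)\neq 2$, so $2\in K^\times$; if $\zeta_4\notin K^\times$ and $1+\zeta_4\in GK^\times$ (equivalently $1-\zeta_4\in GK^\times$, since $(1+\zeta_4)(1-\zeta_4)=2\in K^\times$), then $(1+\zeta_4)^2=2\zeta_4$ gives $\zeta_4\in GK^\times$, of order $2$ modulo $K^\times$, which forces $n$ even. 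Hence for odd $n$ only the first condition can fail, while for even $n$ either may.

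Suppose the first condition fails, i.e.\ there is an odd prime $p$ with $\zeta_p\notin K^\times$ and $\zeta_p\in GK^\times$. Then the order of $\zeta_p$ modulo $K^\times$ is exactly $p$, so $p\mid n$, whence $p\mid z$ and in particular $z>1$. Since the minimal polynomial of $\zeta_z$ over $K$ divides the $z$-th cyclotomic polynomial, $[K(\zeta_z):K]\le\varphi(z)<z$. Feeding this into Theorem \ref{divisibility} yields
\[
[K(G):K]\ \Big|\ \frac{1}{z}\,[K(\zeta_z):K]\cdot|GK^\times:K^\times|,
\]
and the right-hand side is a positive integer strictly smaller than $|GK^\times:K^\times|$ because $[K(\zeta_z):K]/z\le\varphi(z)/z<1$. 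Therefore $[K(G):K]<|GK^\times:K^\times|$. (The same conclusion can be read directly off Theorem \ref{mainthm}: one has $\langle\zeta_z\rangle\subseteq\mu_{n'}(GK^\times)\cap K(\zeta_z)^\times$ meeting $\mu_{n'}(K^\times)$ trivially, so the relevant denominator index is at least $z$, while the numerator $[K(\zeta_z):K]$ is at most $\varphi(z)$.)

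It remains to treat the failure of the second condition for $n$ even, which is the heart of the matter. If the first condition also fails we are already done by the previous paragraph, so assume the first condition holds; then $z=1$ and $K(\zeta_z)=K$. In the even formula of Theorem \ref{mainthm} the two denominator indices then collapse: $\mu_{n'}(GK^\times)\cap K^\times=\mu_{n'}(K^\times)$ makes the first factor equal to $1$, and the second factor $|\mu_{2^{f+1}}(GK^\times)(GK^\times\cap\sqrt{K^\times})\cap K:K^\times|$ is some positive integer. Hence the ratio equals $2^{-\Delta}$ divided by a positive integer, and it suffices to show $\Delta\ge 1$. This is precisely what Definition \ref{Delta} and Section \ref{sec:better2} are designed to detect: the hypothesis $1\pm\zeta_4\in GK^\times$ with $\zeta_4\notin K^\times$ is exactly the situation producing the additional $K$-linear relation of the shape $1+\zeta_{2^w}\in 1\cdot K+\zeta_4\cdot K$ recorded there, so $\Delta\ge 1$ and the ratio is at most $1/2<1$. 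The prototype is $K=\Q$, $G=\langle 1+\zeta_4\rangle$: here $n=4$, $z=1$, all denominator indices are trivial, and $K(G)=\Q(\zeta_4)$ has degree $2$ against index $4$, so the ratio $1/2$ is accounted for precisely by $\Delta=1$.

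The main obstacle is the second condition. Unlike the first, it is invisible to the crude bound of Theorem \ref{divisibility}, which for $z=1$ only gives $[K(G):K]\mid|GK^\times:K^\times|$ and hence the non-strict $[K(G):K]\le|GK^\times:K^\times|$; one genuinely has to descend into the $2$-adic bookkeeping behind Definition \ref{Delta} to certify that $\Delta\ge 1$ and that this defect is not silently compensated by the index $|\mu_{2^{f+1}}(GK^\times)(GK^\times\cap\sqrt{K^\times})\cap K(\zeta_z)^\times:K^\times|$. The odd case, by contrast, is robust: the strict inequality $\varphi(z)<z$ does all the work and leaves no room for compensation.
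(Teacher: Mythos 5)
Your treatment of the first condition is correct, and it even takes a slightly different route from the paper: you invoke Theorem \ref{divisibility} together with the strict inequality $[K(\zeta_z):K]\leq\varphi(z)<z$, whereas the paper argues directly from Theorem \ref{mainthm} that the denominator index is divisible by $z$ while the numerator is at most $\varphi(z)$; your parenthetical remark recovers the paper's argument, so this half is solid either way. Your reduction in the even case is also correctly set up: with $z=1$ both denominator indices in Theorem \ref{mainthm} are trivial (in fact the second one is exactly $1$, not merely ``some positive integer'', since the group in question is then a subgroup of $K^\times$ containing $K^\times$), so the ratio equals $2^{-\Delta}$ and everything hinges on $\Delta\geq 1$.

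That is precisely where there is a genuine gap: you assert $\Delta\geq 1$ on the grounds that the hypothesis $1\pm\zeta_4\in GK^\times$, $\zeta_4\notin K^\times$ ``is exactly the situation producing the additional relation $1+\zeta_{2^w}\in 1K+\zeta_4K$'', but this is not a proof, and it is not even an accurate description of the mechanism. By Definition \ref{Delta}, establishing $\Delta\geq 1$ means running the case analysis of Theorem \ref{better2} and Lemma \ref{lembetter2} for $G^{n'}$: the possible values $2^{2-m}$, $2^{1-f}$, $2^{2-\min(w',m)}$, $2^{1-\min(w,m)}$ there can perfectly well equal $1$ (e.g.\ $m=2$ in the first case), so you would have to show that the failure of the second Kneser condition excludes those configurations --- and the defect need not come from the special relation \eqref{eq:specialent} at all (the $\delta=\tfrac12$ cases of Theorem \ref{Ryb2} require $a\in G^nK^{\times n}$ plus further conditions; the defect can instead come from $m\geq 3$, i.e.\ relations of the form \eqref{eq:normalent}). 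The paper sidesteps this bookkeeping entirely with a short comparison argument you are missing: after replacing $G$ by $G^{n'}$ (so $n=2^f$), set $L=K(\zeta_4)$ and apply Kneser's theorem \emph{over $L$} to get $[L(G):L]=|GL^\times:L^\times|$; then $[K(G):K]/[L(G):L]$ divides $2$, while $|GK^\times:K^\times|/|GL^\times:L^\times|$ is a multiple of $4$ because the class of $1\pm\zeta_4$ has order $4$ in $GK^\times/K^\times$ (Remark \ref{Knesercondi}) and becomes trivial in $GL^\times/L^\times$ since $1\pm\zeta_4\in L^\times$. This yields the strict inequality $[K(G):K]<|GK^\times:K^\times|$ (equivalently $\Delta\geq 1$) without touching Definition \ref{Delta}; some such argument must be supplied to close your proof.
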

\begin{proof}
We rely on Theorem \ref{mainthm}.
If $z>1$, then $[K(\zeta_z):K]\leq \varphi(z)$ while 
$|\mu_{n'}(G K^\times)\cap K(\zeta_{z})^\times:K^\times|$ is divisible by $z$ hence there is some prime $p\mid z$ such that the $p$-adic valuation of $[K(G):K]/|GK^\times:K^\times|$ is non-zero. Now suppose that $z=1$ but that the second condition in Kneser's theorem does not hold: we prove that the $2$-adic valuation of $[K(G):K]/|GK^\times:K^\times|$ is non-zero. To study this $2$-adic valuation, we may replace $G$ by $G^{n'}$ (hence $n$ becomes $2^f$). Then by Kneser's theorem over $L=K(\zeta_4)$ we have 
$[L(G):L]=|GL^\times:L^\times|$. We may conclude because $[K(G):K]/[L(G):L]$ divides $2$ while $|GK^\times:K^\times|/|GL^\times:L^\times|$ is a multiple of $4$ because (with the appropriate sign choice) the class of $(1\pm \zeta_4)\in L^\times$ has order $4$ in 
$GK^\times/K^\times$.
\end{proof}

\begin{rem}\label{Knesercondi}
If $\zeta_p\in GK^\times$ and $\zeta_p\notin K^\times$, then we have $p\mid n$. If (for a sign choice) $1\pm \zeta_4\in GK^\times$ and $\zeta_4\notin K^\times$, then $4\mid n$ because  
$(1\pm \zeta_4)^2=\pm 2\zeta_4$ and $(1\pm \zeta_4)^4=-4$  hence the order of $1\pm \zeta_4$ in $GK^\times/K^\times$ is $4$. 
\end{rem}

By the above remark, the two conditions in Kneser's theorem are satisfied if $\zeta_n\in K$.
In this case, the extension $K(G)/K$ is a \emph{Kummer extension} and its Galois group is abelian of exponent dividing $n$:

\begin{thm}[Kummer theory]
    If $\zeta_n\in K$, the groups $\Gal(K(G)/K)$ and $GK^\times/K^\times$ and $G^nK^{\times n}/K^{\times n}$ are isomorphic. In particular, we have
    \begin{equation}\label{all} [K(G):K]=|GK^\times:K^\times|=|G^nK^{\times n}:K^{\times n}|\,.
    \end{equation}
\end{thm}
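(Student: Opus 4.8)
The plan is to identify $\Gal(K(G)/K)$ with $GK^\times/K^\times$ through the classical Kummer pairing, and to recover the third group directly from the exact sequence of Remark~\ref{remexact}. First I would check that $K(G)/K$ is Galois. Each $g\in G$ satisfies $g^n=a\in K^\times$, and since $\zeta_n\in K$ and $n$ is coprime to $\ch(K)$, the polynomial $X^n-a$ is separable and splits completely over $K(g)$, its roots being the $\zeta_n^i g$. Hence $K(G)$ is the splitting field over $K$ of a family of separable polynomials, so $\Gamma:=\Gal(K(G)/K)$ is well defined; it is finite because $|GK^\times:K^\times|$ finite means $K(G)$ is generated over $K$ by finitely many radicals. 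Moreover $\Gamma$ has exponent dividing $n$: for $\sigma\in\Gamma$ and $g\in G$ one has $\sigma(g)=\zeta g$ with $\zeta\in\mu_n(K^\times)$, so $\sigma^n$ fixes every generator.

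Next I would set up the Kummer pairing. For $\sigma\in\Gamma$ and $g\in G$ the element $\sigma(g)/g$ satisfies $(\sigma(g)/g)^n=\sigma(g^n)/g^n=1$, so it lies in $\mu_n(K^\times)$. I define
$$\langle\,\cdot\,,\,\cdot\,\rangle:\Gamma\times GK^\times/K^\times\to\mu_n(K^\times),\qquad \langle\sigma,gK^\times\rangle=\sigma(g)/g,$$
and verify that it is well defined (the value is unchanged upon multiplying $g$ by an element of $K^\times$, which $\sigma$ fixes) and bilinear: linearity in the second argument is immediate, while in the first argument one uses that $\tau(g)/g\in\mu_n(K^\times)\subseteq K^\times$ is fixed by $\sigma$. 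The pairing is non-degenerate on both sides. Its left kernel is trivial, since $\sigma(g)=g$ for all $g\in G$ forces $\sigma$ to fix $K(G)$, hence $\sigma=\id$; its right kernel is trivial, since $\sigma(g)=g$ for every $\sigma\in\Gamma$ places $g$ in the fixed field $K$ by the Galois correspondence, so $gK^\times$ is trivial in the quotient.

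I would then conclude the first isomorphism abstractly. The pairing is a non-degenerate bilinear pairing of two finite abelian groups of exponent dividing $n$ into the cyclic group $\mu_n(K^\times)$ of order $n$, and so it yields injections $\Gamma\hookrightarrow\Hom(GK^\times/K^\times,\mu_n(K^\times))$ and $GK^\times/K^\times\hookrightarrow\Hom(\Gamma,\mu_n(K^\times))$. Since a finite abelian group killed by $n$ is (non-canonically) isomorphic to its group of homomorphisms into $\mu_n(K^\times)\cong\Z/n\Z$, comparing orders forces both injections to be isomorphisms; in particular $\Gamma\cong GK^\times/K^\times$ and $[K(G):K]=|GK^\times:K^\times|$. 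For the third group I would use that $\zeta_n\in K$ gives $\mu_n(GK^\times)=\mu_n(K^\times)$, so in the exact sequence (\ref{exact}) the kernel $\mu_n(GK^\times)K^\times/K^\times$ is trivial and the exponentiation-by-$n$ map is an isomorphism $GK^\times/K^\times\cong G^nK^{\times n}/K^{\times n}$; equivalently the second factor in (\ref{exactall}) equals $1$. This completes the chain of isomorphisms and establishes (\ref{all}).

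The computations here are all routine, and I expect no genuine difficulty. The only steps needing a little care are the abstract linear-algebra input—that a non-degenerate pairing of finite $n$-torsion abelian groups into $\mu_n(K^\times)$ forces equality of orders and hence isomorphism—and the correct invocation of the Galois correspondence in the proof of right non-degeneracy. Both are standard once the pairing is in place, so the heart of the argument is simply exhibiting the pairing and verifying its non-degeneracy.
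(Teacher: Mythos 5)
Your proof is correct and takes essentially the same route as the paper: the paper obtains $\Gal(K(G)/K)\cong GK^\times/K^\times$ by citing the standard Kummer duality (Lang, Theorem 8.1, Chapter VI), whose proof is exactly the pairing argument you spell out, and it derives $GK^\times/K^\times\cong G^nK^{\times n}/K^{\times n}$ from the exact sequence of Remark \ref{remexact} using $\mu_n(GK^\times)=\mu_n(K^\times)=\langle\zeta_n\rangle$, precisely as you do. The only difference is that you inline the proof of the cited result instead of invoking it.
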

\begin{proof}
The isomorphism between the former and latter group is one of the main results in Kummer theory (see \cite[Theorem 8.1, Chapter VI]{LangAlgebra}). The isomorphism between the second and the third group is a consequence of \eqref{exact} because $\mu_n(GK^\times)=\mu_n(K^\times)=\langle \zeta_n\rangle$.
\end{proof}

If $K$ and $G$ consist of real numbers, then $\mu_n(GK^\times)=\mu_n(K^\times)=\{\pm 1\}$ and the two conditions in Kneser's theorem are satisfied hence \eqref{all} holds (see also  \cite[Theorem 2.2]{Rybowicz}).

\section{The case where $n$ is an odd prime power}

We suppose that $n$ is the power of an odd prime number $p$ (thus, the characteristic of $K$ is different from $p$). 
We rely on the following result, which combines Theorem \ref{Kneserthm} (in view of  Remark \ref{Knesercondi}) and \cite[Theorem 2.3]{Rybowicz}: 

\begin{thm}[Kneser - Rybowicz]\label{thmKR}
We have
\begin{align*}
[K(G):K] &= |G^nK^{\times n}:K^{\times n}| \cdot [K(\mu_{n}(GK^\times)):K]\,.
\end{align*}
Moreover, if $\zeta_p\notin GK^\times$ or $\zeta_p\in K^\times$, then we have 
$$[K(G):K]= |GK^{\times}:K^{\times}|\,.$$
\end{thm}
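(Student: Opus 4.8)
The plan is to pass to the larger base field $L:=K(\mu_{n}(GK^\times))$, where Kneser's theorem (Theorem \ref{Kneserthm}) becomes applicable, and to invoke \cite[Theorem 2.3]{Rybowicz} to compare the radical indices over $K$ and over $L$. The first observation is that $K(G)=L(G)$, since $\mu_{n}(GK^\times)\subseteq GK^\times\subseteq K(G)$ gives $L\subseteq K(G)$. Hence the tower formula yields
$$[K(G):K]=[L(G):L]\cdot[K(\mu_{n}(GK^\times)):K],$$
and it suffices to prove $[L(G):L]=|G^nK^{\times n}:K^{\times n}|$.

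To apply Kneser over $L$ I would check its two conditions, using that $n=p^{k}$ is an odd prime power, so that $GL^\times/L^\times$ has exponent dividing $n$ (because $G^n\subseteq K^\times\subseteq L^\times$). For an odd prime $q\neq p$, if $\zeta_q\in GL^\times$ then $\zeta_q^{\,n}\in L^\times$ and $\gcd(q,n)=1$ force $\zeta_q\in L^\times$. For $q=p$: if $\zeta_p\in GK^\times$ then $\zeta_p\in\mu_{n}(GK^\times)\subseteq L$; if $\zeta_p\notin GK^\times$ then $\mu_{n}(GK^\times)=\{1\}$ — any nontrivial subgroup of the cyclic $p$-group $\langle\zeta_{p^{k}}\rangle$ contains $\zeta_p$ — whence $L=K$ and $\zeta_p\notin GL^\times$. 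For the $\zeta_4$ condition, since $n$ is odd any $1\pm\zeta_4\in GL^\times$ would have order dividing both $n$ and $4$ modulo $L^\times$ (the latter from $(1\pm\zeta_4)^4=-4\in L^\times$, cf.\ Remark \ref{Knesercondi}), hence order $1$, forcing $\zeta_4\in L^\times$. Kneser's theorem then gives $[L(G):L]=|GL^\times:L^\times|$.

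The remaining identity $|GL^\times:L^\times|=|G^nK^{\times n}:K^{\times n}|$ is the heart of the matter and the step I expect to be the main obstacle; it is precisely here that \cite[Theorem 2.3]{Rybowicz} enters. Applying the exact sequence \eqref{exactall} with $L$ in place of $K$ (valid since $G^n\subseteq K^\times\subseteq L^\times$) reduces the identity to two statements about the cyclotomic $p$-extension $L/K$: first, that $\mu_{n}(GL^\times)=\mu_{n}(L^\times)$, i.e.\ no new $p$-power root of unity appears in $GL^\times$; and second, that $G^n\cap L^{\times n}=G^n\cap K^{\times n}$, i.e.\ adjoining the roots of unity in $\mu_{n}(GK^\times)$ produces no new $n$-th-power relation among the elements of $G^n\subseteq K^\times$. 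Granting these, the $\mu$-factor of \eqref{exactall} over $L$ is trivial and
$$|GL^\times:L^\times|=|G^nL^{\times n}:L^{\times n}|=|G^n:G^n\cap L^{\times n}|=|G^n:G^n\cap K^{\times n}|=|G^nK^{\times n}:K^{\times n}|.$$
Both statements concern radicals over a cyclotomic extension and rest on the Kneser--Schinzel machinery underlying Rybowicz's work.

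For the concluding assertion, assume $\zeta_p\notin GK^\times$ or $\zeta_p\in K^\times$. I would verify that Kneser's conditions already hold over $K$: for odd primes $q\neq p$ we have $\zeta_q\notin GK^\times$ by Remark \ref{Knesercondi}; for $q=p$ the hypothesis is exactly the required alternative; and the $\zeta_4$ condition holds because $4\nmid n$ forces $1\pm\zeta_4\notin GK^\times$ whenever $\zeta_4\notin K^\times$, again by Remark \ref{Knesercondi}. Kneser's theorem then gives $[K(G):K]=|GK^\times:K^\times|$. This is also consistent with the first identity: if $\zeta_p\notin GK^\times$ then $L=K$ and $\mu_{n}(GK^\times)=\mu_{n}(K^\times)$, while if $\zeta_p\in K^\times$ then $[K(\mu_{n}(GK^\times)):K]=|\mu_{n}(GK^\times):\mu_{n}(K^\times)|$, so \eqref{exactall} recovers the equality.
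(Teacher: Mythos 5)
Your peripheral steps are correct and partly coincide with the paper's route: the derivation of the second assertion from Theorem \ref{Kneserthm} via Remark \ref{Knesercondi} is exactly the intended argument, the tower reduction through $L=K(\mu_n(GK^\times))$ is valid (indeed $L\subseteq K(G)$, so $[K(G):K]=[L(G):L]\cdot[L:K]$), and your verification of Kneser's two conditions over $L$ is sound, so $[L(G):L]=|GL^\times:L^\times|$. But there is a genuine gap at what you yourself call the heart of the matter: the identity $|GL^\times:L^\times|=|G^nK^{\times n}:K^{\times n}|$ is never proved. You reduce it, via \eqref{exactall} over $L$, to the two statements $\mu_n(GL^\times)=\mu_n(L^\times)$ and $G^n\cap L^{\times n}=G^n\cap K^{\times n}$, and then write ``granting these''. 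That reduction buys nothing: since $|G^nL^{\times n}:L^{\times n}|=|G^n:G^n\cap L^{\times n}|\leq |G^n:G^n\cap K^{\times n}|=|G^nK^{\times n}:K^{\times n}|$ and the factor $|\mu_n(GL^\times):\mu_n(L^\times)|$ is at least $1$, the conjunction of your two granted statements is (given your correct Kneser-over-$L$ step) \emph{equivalent} to the first displayed formula of the theorem. In other words, what you have granted is precisely the theorem itself, repackaged.

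The granted statements are also not routine, so they cannot be waved through. The second one cannot be obtained by the standard norm/coprime-degree argument, because $[L:K]=[K(\zeta_{p^m}):K]$ may be divisible by $p$ (already $K=\Q$, $L=\Q(\zeta_9)$ shows this); controlling which elements of $K^\times$ become $n$-th powers in a cyclotomic extension is genuine Schinzel-type content, and it is exactly this content that constitutes \cite[Theorem 2.3]{Rybowicz}. For comparison, the paper gives no internal proof either: it imports the first displayed equality wholesale as Rybowicz's Theorem 2.3 and uses Kneser's theorem (via Remark \ref{Knesercondi}) only for the second assertion. A blind proof is therefore legitimate if it either cites the precise statement of Rybowicz's theorem for the first display, or actually proves your two sub-claims; your appeal to ``the Kneser--Schinzel machinery underlying Rybowicz's work'' does neither — it is a placeholder rather than a citation or an argument — so the proof of the first formula is incomplete, while your proof of the second assertion stands on its own and matches the paper.
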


We deduce that, if $\zeta_p\notin GK^\times$ or $\zeta_p\in K^\times$, then the degree $[K(G):K]$ is a power of $p$ while in the remaining case it is a power of $p$ times $[K(\zeta_p):K]$.

\begin{cor}\label{corbetterp}
If $H:=\mu_{n}(GK^\times)$, then we have
$$\frac{[K(G):K]}
{|GK^{\times}:K^{\times}|}=\frac{[K(H):K]}
{|HK^{\times}:K^{\times}|}\,.$$
If $\zeta_p\notin K^\times$ and $\zeta_p\in GK^\times$, we have  $H=\langle \zeta_{p^m}\rangle$ for some positive integer $m$. Let $m_0$ be the largest positive integer such that $\zeta_{p^{m_0}}\in K(\zeta_p)^\times$ (or $\infty$, if no such largest integer exists). Then we have
$$\frac{[K(G):K]}
{|GK^{\times}:K^{\times}|}=\begin{cases} 1 & \text{if $\zeta_p\in K^\times$ or $\zeta_p\notin GK^\times$}\\
[K(\zeta_p):K] \cdot p^{-{\min(m_0,m)}} & \text{otherwise}\,.
\end{cases}
$$
\end{cor}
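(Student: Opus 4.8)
The plan is to feed the group $H=\mu_n(GK^\times)$ into the two degree formulas already at our disposal and then reduce everything to a single cyclotomic degree computation. For the first equality I would start from Theorem \ref{thmKR}, which gives $[K(G):K]=|G^nK^{\times n}:K^{\times n}|\cdot[K(H):K]$, and combine it with the index factorisation \eqref{exactall}, namely $|GK^\times:K^\times|=|G^nK^{\times n}:K^{\times n}|\cdot|\mu_n(GK^\times):\mu_n(K^\times)|$. The common factor $|G^nK^{\times n}:K^{\times n}|$ cancels in the ratio, leaving $[K(H):K]/|H:\mu_n(K^\times)|$. It then remains to identify the denominator with $|HK^\times:K^\times|$: since every element of $H$ is a root of unity of order dividing $n$, we have $H\cap K^\times=\mu_n(K^\times)$, so the isomorphism $HK^\times/K^\times\cong H/(H\cap K^\times)$ yields $|HK^\times:K^\times|=|H:\mu_n(K^\times)|$. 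This proves the first displayed identity.

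For the explicit formula I would first dispose of the case $\zeta_p\in K^\times$ or $\zeta_p\notin GK^\times$: here the second assertion of Theorem \ref{thmKR} gives $[K(G):K]=|GK^\times:K^\times|$, so the ratio is $1$. In the remaining case $\zeta_p\in GK^\times$ and $\zeta_p\notin K^\times$, the group $H$ is a finite group of roots of unity of $p$-power order, hence cyclic: $H=\langle\zeta_{p^m}\rangle$ with $m\geq 1$ (note $\zeta_p\in H$ because $p\mid n$). Since $\zeta_p\notin K^\times$ forces $\mu_n(K^\times)=\{1\}$, I would then read off $K(H)=K(\zeta_{p^m})$ and $|HK^\times:K^\times|=|H|=p^m$, so by the first part the ratio equals $[K(\zeta_{p^m}):K]\cdot p^{-m}$.

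The crux is the cyclotomic degree $[K(\zeta_{p^m}):K]$, which I would factor through $L:=K(\zeta_p)$ as $[K(\zeta_{p^m}):K]=[L(\zeta_{p^m}):L]\cdot[K(\zeta_p):K]$ and evaluate using $m_0$. The claim to establish is $[L(\zeta_{p^m}):L]=p^{\,m-\min(m,m_0)}$; substituting it and cancelling $p^m$ delivers exactly $[K(\zeta_p):K]\cdot p^{-\min(m_0,m)}$, the case $m_0=\infty$ being the one where $\zeta_{p^m}\in L$ and the factor is trivial. To prove the claim I would use the cyclotomic character $\Gal(L(\zeta_{p^\infty})/L)\hookrightarrow\Aut(\mu_{p^\infty})=\Z_p^\times$: since $\zeta_{p^{m_0}}\in L$ but $\zeta_{p^{m_0+1}}\notin L$, its image lies in $1+p^{m_0}\Z_p$ but not in $1+p^{m_0+1}\Z_p$. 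Here lies the main obstacle, and the only place oddness of $p$ is used: for odd $p$ the group $1+p\Z_p\cong\Z_p$ is a torsion-free pro-cyclic group whose closed subgroups are totally ordered by inclusion, so the image is forced to be exactly $1+p^{m_0}\Z_p$, with no ``stalling'' in the tower once $\zeta_p$ lies in the base. The degree $[L(\zeta_{p^m}):L]$ is then the order of $(1+p^{m_0}\Z_p)/(1+p^m\Z_p)$, equal to $p^{m-m_0}$ for $m\geq m_0$ and to $1$ otherwise, which completes the argument. This profinite input cleanly replaces the more hands-on induction showing that each step $L(\zeta_{p^{j}})\subseteq L(\zeta_{p^{j+1}})$ has degree exactly $p$ for $j\geq m_0$.
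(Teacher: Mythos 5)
Your proof is correct and follows essentially the same route as the paper's: combine Theorem \ref{thmKR} with \eqref{exactall}, cancel the common factor $|G^nK^{\times n}:K^{\times n}|$, identify $|H:\mu_n(K^\times)|=|HK^\times:K^\times|$ via $H\cap K^\times=\mu_n(K^\times)$, and reduce to the single cyclotomic degree $[K(\zeta_{p^m}):K]$. The only difference is that the paper asserts $[K(H):K]=[K(\zeta_p):K]\,p^{\max(m-m_0,0)}$ without proof, while you justify it via the cyclotomic character and the chain of closed subgroups of $1+p\Z_p$ (valid precisely because $p$ is odd and $p\neq\ch(K)$) --- a correct filling-in of a step the paper leaves implicit.
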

\begin{proof}
    Combining Theorem \ref{thmKR} and  \eqref{exactall} we get
$$\frac{[K(G):K]}
{|GK^{\times}:K^{\times}|}=\frac{[K(\mu_{n}(GK^\times)):K]}{| \mu_{n}(GK^\times): \mu_{n}(K^\times)|}\,.$$
From $H\cap K^{\times}=\mu_{n}(K^\times)$ we deduce that $|H: \mu_{n}(K^\times)|=|HK^{\times}:K^{\times}|$.
By Theorem \ref{thmKR} we are left to deal with the case $\zeta_p\notin K^\times$ and $\zeta_p\in GK^\times$. We may conclude because $|H: \mu_{n}(K^\times)|=p^m$ while $[K(H):K]=[K(\zeta_p):K]p^{\max(m-m_0,0)}$.
\end{proof}

All entanglement stems from $K$-linear relations among roots of unity. The $K$-linear relation
$$1+\zeta_p+\zeta_p^2+\cdots +\zeta_p^{p-1}=0$$
has to be counted only if $\zeta_p\notin K^\times$, and it is only relevant if $\zeta_p\in GK^\times$.
If $d_p:=[K(\zeta_p):K]$, the minimal polynomial of $\zeta_p$ gives a $K$-linear relation among $1,\zeta_p,\ldots,\zeta_p^{d_p}$, and all the powers $\zeta_p^i$ for $i\geq d_p$ are $K$-linear combinations of the roots of unity $1,\zeta_p,\ldots, \zeta_p^{d_p-1}$.
We also have 
$$(\zeta_{p^m})^{p^{j+\max(m-m_0,0)}}\in 1K+\zeta_pK+\cdots +\zeta_p^{d_p-1}K \qquad \text{for}\quad j\geq 0$$
because any element in $K(\zeta_p)$ is of this form.
These $K$-linear relations generate all others for $K(G)$ (beyond those stemming from the group $GK^\times/K^\times$) because by the above result they already explain the degree of $K(G)/K$.

\section{The case where $n$ is a power of $2$}\label{sec:better2}

Let $n=2^f$ for some positive integer $f$. \emph{We suppose that $f\geq 2$ and $\zeta_4\notin K^\times$} (else, we already know that $[K(G):K]=|GK^\times:K^\times|$ by Theorem \ref{Kneserthm} and Remark \ref{Knesercondi}).
For every positive integer $t$ we write $\xi_{2^t}=\zeta_{2^t}+\zeta_{2^t}^{-1}$. Moreover, we let $w$ be the largest integer such that $\xi_{2^w}\in K^\times$, or set $w=\infty$ if no such largest integer exists.

The following lemma is due to Schinzel, and it also holds for $f=1$ (see \cite[Lemma 2]{Schinzel-ab}):

\begin{lem}[Schinzel]\label{lemminoSchinzel}
 The kernel of the map
$$K^{\times}/K^{\times n}\rightarrow K^{\times}K(\zeta_4)^{\times n}/K(\zeta_4)^{\times n}
$$
induced by the inclusion is generated by the class of the following element:
$$a=\begin{cases} -1 & \text{if $w>f$}\\
-\xi_{2^{w+1}}^n & \text{if $w=f$} \\
\xi_{2^{w+1}}^n & \text{if $w<f$}\,. \\
\end{cases}$$
\end{lem}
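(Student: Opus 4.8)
The plan is to identify the kernel with $(K^{\times}\cap K(\zeta_4)^{\times n})/K^{\times n}$ and compute it by Galois descent along the quadratic extension $L:=K(\zeta_4)$ of $K$. Writing $\sigma$ for the nontrivial element of $\Gal(L/K)$, note that $\sigma(\zeta_4)=\zeta_4^{-1}$ since $\zeta_4\notin K$. A class $x K^{\times n}$ lies in the kernel exactly when $x=\beta^n$ for some $\beta\in L^{\times}$; applying $\sigma$ and using $\sigma(x)=x$ gives $(\sigma(\beta)/\beta)^n=1$, so $\sigma(\beta)/\beta\in\mu_n(L)$.

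First I would prove that the kernel has order at most $2$. The group $\mu_n(L)=\mu_{2^f}(L)$ is cyclic, say $\mu_n(L)=\langle\zeta_{2^c}\rangle$ with $2\le c\le f$, and since $\zeta_4=\zeta_{2^c}^{2^{c-2}}$ is inverted by $\sigma$, the automorphism $\sigma$ acts on $\zeta_{2^c}$ by an exponent $j\equiv-1\pmod 4$; in particular $v_2(j-1)=1$. The assignment $x\mapsto \sigma(\beta)/\beta$ is well defined modulo the subgroup $\{\sigma(\zeta)\zeta^{-1}:\zeta\in\mu_n(L)\}$ (replacing $\beta$ by $\beta\zeta$ changes $\sigma(\beta)/\beta$ by $\sigma(\zeta)\zeta^{-1}$), and it defines a homomorphism into the quotient of $\mu_n(L)$ by that subgroup whose kernel is exactly $K^{\times n}$: indeed $\sigma(\beta)/\beta=\sigma(\zeta)\zeta^{-1}$ forces $\beta/\zeta\in K^{\times}$, whence $x=(\beta/\zeta)^n\in K^{\times n}$. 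Thus $(K^{\times}\cap L^{\times n})/K^{\times n}$ embeds into a cyclic group of order $\gcd(j-1,2^c)=2$, so the kernel is cyclic of order $1$ or $2$ and it suffices to exhibit the correct generator.

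The arithmetic input for the construction is the identity $1+\zeta_{2^m}=\zeta_{2^{m+1}}\xi_{2^{m+1}}$, together with the fact that $\xi_{2^m}\in K$ implies $\zeta_{2^m}\in L$ (because $K(\zeta_{2^m})=K(\xi_{2^m},\zeta_4)$). Using these I would produce an explicit $n$-th root of $a$ in $L$ in each case. If $w>f$ then $\xi_{2^{f+1}}\in K$, so $\zeta_{2^{f+1}}\in L$ and $a=-1=\zeta_{2^{f+1}}^{\,n}\in L^{\times n}$, while $a\notin K^{\times n}\subseteq K^{\times 2}$ because $\zeta_4\notin K$. If $w\le f$ then $\zeta_{2^w}\in L$, and for $\beta=1+\zeta_{2^w}=\zeta_{2^{w+1}}\xi_{2^{w+1}}$ one gets $\beta^{\,n}=\zeta_{2^{w+1}}^{\,2^{f}}\,\xi_{2^{w+1}}^{\,n}$; here $\zeta_{2^{w+1}}^{\,2^{f}}=1$ when $w<f$ and $\zeta_{2^{w+1}}^{\,2^{f}}=-1$ when $w=f$, which reproduces $a=\xi_{2^{w+1}}^{\,n}$ and $a=-\xi_{2^{w+1}}^{\,n}$ respectively. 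Note $\xi_{2^{w+1}}^{2}=\xi_{2^w}+2\in K$, so $a\in K^{\times}$ as required. To decide whether the class of $a$ is trivial, a short computation using that the only $2$-power roots of unity in $K$ are $\pm1$ shows: when $w=f$ an equation $a=y^n$ would make $y^2/\xi_{2^{f+1}}^{2}$ a primitive $2^{f}$-th root of unity in $K$, which is impossible, so the kernel has order $2$; when $w<f$ one finds $y^2=\pm\xi_{2^{w+1}}^{2}$, where the sign $+$ gives $\xi_{2^{w+1}}\in K$ against the maximality of $w$, so $a\in K^{\times n}$ can occur only with $-(\xi_{2^w}+2)\in K^{\times 2}$, which is precisely the situation in which the kernel is trivial.

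The main obstacle I expect lies in this last, case-by-case bookkeeping: producing a single element $\beta\in L$ whose $n$-th power is the stated $a$ in all three regimes, and correctly tracking the sign of $a$ and the possibility that the class of $a$ is trivial (as it must be when $w<f$ and $-(\xi_{2^w}+2)$ is a square in $K$). The upper bound of the second paragraph guarantees that there is nothing beyond a single generator to find, so once the element $a$ is correctly identified and shown to be an $n$-th power in $L$ the lemma follows.
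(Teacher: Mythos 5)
Your descent setup is sound and almost all of the computations check out: the embedding of $(K^{\times}\cap L^{\times n})/K^{\times n}$ (where $L:=K(\zeta_4)$) into the order-$2$ quotient of $\mu_n(L)$ is correct, as are the identity $1+\zeta_{2^w}=\zeta_{2^{w+1}}\xi_{2^{w+1}}$ producing an explicit $\beta\in L^{\times}$ with $\beta^n=a$ in each regime, and the computations deciding when $a\in K^{\times n}$. For context: the paper itself does not prove this lemma at all --- it quotes \cite[Lemma 2]{Schinzel-ab} --- so yours would be a self-contained alternative; the triviality analysis you sketch at the end is essentially what the paper proves separately as its Lemma \ref{orders}.

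However, there is a genuine gap at the last step. Knowing that the kernel has order at most $2$ and that the class of $a$ lies in it proves the lemma only when the class of $a$ is \emph{nontrivial}. In the case $w<f$ with $-(\xi_{2^w}+2)\in K^{\times 2}$ --- which really occurs, e.g.\ in every odd characteristic, and in characteristic $0$ whenever $K\cap\Q(\zeta_{2^\infty})$ is not totally real, cf.\ Lemma \ref{orders} --- the class of $a$ is trivial, and your argument does not exclude that the kernel is generated by some \emph{other} element; your closing claims (``which is precisely the situation in which the kernel is trivial'', ``the upper bound guarantees that there is nothing beyond a single generator to find'') assert rather than prove exactly this point. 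Fortunately the gap closes with machinery you already set up but never exploited: for $x=\beta^n$ in the kernel, applying $\sigma$ to $\zeta:=\sigma(\beta)/\beta$ gives $\sigma(\zeta)=\zeta^{-1}$. If $\zeta=\zeta_{2^c}^{u}$ with $u$ odd, this reads $u(j+1)\equiv 0\pmod{2^c}$, forcing $j\equiv-1\pmod{2^c}$; then $\sigma$ fixes $\xi_{2^c}$, so $\xi_{2^c}\in K^{\times}$ and $c\leq w$. On the other hand, $-(\xi_{2^w}+2)\in K^{\times2}$ means $\zeta_4\xi_{2^{w+1}}\in K^{\times}$, whence $\xi_{2^{w+1}}\in L$ and so $\zeta_{2^{w+1}}\in L$ (by your fact $L(\zeta_{2^{w+1}})=L(\xi_{2^{w+1}},\zeta_4)$), giving $c\geq w+1$ since $w+1\leq f$ --- a contradiction. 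Hence every kernel class maps to a square in $\mu_n(L)$, i.e.\ to the trivial class of the order-$2$ quotient, and by the injectivity you established it lies in $K^{\times n}$; with this addition your proof is complete.
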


Notice that $\xi_{2^{w+1}}^n=(\xi_{2^w}+2)^{\frac{n}{2}}$. Since $\xi_{2^w}\in K^{\times}$, we always have $a^2\in K^{\times n}$. However, the class of $a$ modulo $K^{\times n}$ may have order $1$ or $2$: 

\begin{lem}\label{orders}
With the notation of Lemma \ref{lemminoSchinzel}, we have $a\notin K^{\times n}$ if and only if 
$w\geq f$ or $\ch(K)=0$ and $K\cap \Q(\zeta_{2^\infty})$ is totally real.
\end{lem}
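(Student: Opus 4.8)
The plan is to pin down the order of the class of $a$ in $K^\times/K^{\times n}$, which is $1$ or $2$ because $a^2\in K^{\times n}$; so $a\notin K^{\times n}$ precisely when this order is $2$. The key structural fact I would exploit is that, since $\zeta_4\notin K$ and $\ch(K)\neq 2$, the only $2$-power roots of unity in $K^\times$ are $\pm 1$, i.e.\ $\mu_{2^{f-1}}(K^\times)=\{\pm1\}$. This lets me reduce every case to a question about squares. If $w\geq f$ (the cases $a=-1$ and $a=-\xi_{2^{f+1}}^n$) a direct argument suffices: from $-1=c^{2^f}$ one sees $c$ has order $2^{f+1}$, hence $\zeta_4\in K$ (as $f\geq 2$), a contradiction; and $a=-\xi_{2^{f+1}}^n=-(\xi_{2^f}+2)^{2^{f-1}}$ with $\xi_{2^f}+2\in K^\times$ reduces, after extracting a $2^{f-1}$-th root, to $-1\in K^{\times 2^{f-1}}$, excluded the same way. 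This settles the $w\geq f$ part in every characteristic.

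For $w<f$ I would write $a=\xi_{2^{w+1}}^n=b^{2^{f-1}}$ with $b=\xi_{2^w}+2=\xi_{2^{w+1}}^2\in K^\times$ (here $b\neq 0$; the boundary value $w=1$, i.e.\ $\sqrt2\notin K$, is read off the standing convention as $w\geq 2$ with $b=2$). Writing $a=c^{2^f}$ gives $(c^2/b)^{2^{f-1}}=1$, so $c^2/b\in\mu_{2^{f-1}}(K^\times)=\{\pm1\}$; since $b\notin K^{\times 2}$ (its square roots are $\pm\xi_{2^{w+1}}\notin K$), this forces $-b\in K^{\times 2}$, and conversely $-b\in K^{\times 2}$ gives $a\in K^{\times n}$ because $f\geq 2$. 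As $\sqrt{-b}=\zeta_4\xi_{2^{w+1}}$, the statement for $w<f$ reduces to deciding whether $\zeta_4\xi_{2^{w+1}}\in K$, equivalently whether the two quadratic extensions $K(\zeta_4)$ and $K(\xi_{2^{w+1}})$ coincide.

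In positive characteristic this resolves at once: $K(\zeta_{2^{w+1}})/K$ is cyclotomic over a field containing $\F_p$, so its Galois group is a quotient of the procyclic $\Gal(\F_p(\zeta_{2^{w+1}})/\F_p)$ and is cyclic; a cyclic extension has a unique quadratic subextension, so $K(\zeta_4)=K(\xi_{2^{w+1}})$, whence $\zeta_4\xi_{2^{w+1}}\in K$ and $a\in K^{\times n}$. This matches the claim, the totally real clause being vacuous. In characteristic $0$ I would pass to $E:=K\cap\Q(\zeta_{2^\infty})$: since $\zeta_4$, $\xi_{2^{w+1}}$, $\zeta_4\xi_{2^{w+1}}$ all lie in $\Q(\zeta_{2^\infty})$, membership in $K$ is membership in $E$ and is controlled by $H:=\Gal(\Q(\zeta_{2^\infty})/E)\leq\Z_2^\times$. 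Because $\xi_{2^w}\in K$, the image of $H$ lands in $V:=\Gal(\Q(\zeta_{2^{w+1}})/\Q(\xi_{2^w}))$, the Klein four-group $\{\pm1,\pm(1+2^w)\}\bmod 2^{w+1}$, under which $\zeta_4$, $\xi_{2^{w+1}}$, $\zeta_4\xi_{2^{w+1}}$ generate the quadratic subextensions fixed by $\langle 1+2^w\rangle$, $\langle -1\rangle$, $\langle 2^w-1\rangle$ respectively. The hypotheses $\zeta_4,\xi_{2^{w+1}}\notin K$ then force the image $\bar H$ of $H$ in $V$ to be either $\langle 2^w-1\rangle$ or all of $V$. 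The easy implication is that $E$ totally real means $c\in H$ for complex conjugation $c$, and since $c$ negates the nonzero element $\zeta_4\xi_{2^{w+1}}$ this gives $\zeta_4\xi_{2^{w+1}}\notin K$, i.e.\ $a\notin K^{\times n}$.

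The main obstacle is the converse in characteristic $0$: showing that when $\bar H=V$ — so $\zeta_4\xi_{2^{w+1}}\notin K$ and $a\notin K^{\times n}$ — the group $H$ actually contains $-1$, i.e.\ $E$ is totally real. The subtlety is that surjectivity of $H$ onto $V$ only produces an element congruent to $-1$ modulo $2^{w+1}$, not the exact unit $-1\in\Z_2^\times$. I expect to close this gap with a short $2$-adic computation in the coordinates $\Z_2^\times\cong\Z/2\times\Z_2$ (with $c=(1,0)$): surjectivity onto $V$ forces $H$ to contain some $(1,\beta)$ with $\beta\in 2^{w-1}\Z_2$ together with an element of infinite order generating the closed subgroup $\{0\}\times 2^{w-2}\Z_2\subseteq H$; subtracting $(0,\beta)$ from $(1,\beta)$ then yields $(1,0)=-1\in H$. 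Combining the positive- and zero-characteristic analyses with the $w\geq f$ case gives $a\notin K^{\times n}$ if and only if $w\geq f$, or $\ch(K)=0$ and $K\cap\Q(\zeta_{2^\infty})$ is totally real.
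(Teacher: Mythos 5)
Your proof is correct and follows essentially the same route as the paper's: the same reduction of $a\in K^{\times n}$ to the question of whether $-(\xi_{2^w}+2)=-\xi_{2^{w+1}}^2$ lies in $K^{\times 2}$ (your use of $\mu_{2^{f-1}}(K^\times)=\{\pm 1\}$ playing the role of the paper's root of unity $\gamma$), the same uniqueness-of-the-quadratic-subextension argument in odd characteristic, and the same passage to $E=K\cap \Q(\zeta_{2^\infty})$ in characteristic $0$. The only difference is one of detail rather than of strategy: where the paper simply asserts that in the non-totally-real case $\zeta_4$ and $\xi_{2^{w+1}}$ generate the same quadratic extension of $E$, your explicit computation with closed subgroups of $\Z_2^\times\cong \Z/2\times \Z_2$ --- showing that surjectivity of $H$ onto the Klein four-group $V$ forces $-1\in H$, hence that $a\notin K^{\times n}$ implies $E$ totally real --- supplies a complete justification of that compressed step, and it is correct.
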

\begin{proof}
We have $(-1)\notin K^{\times n}$ because $\zeta_4\notin K^\times$, so now suppose that $w\leq f$.
The condition $a\in K^{\times n}$ means that $(\xi_{2^{w}}+2)\gamma \in K^{\times 2}$, where $\gamma$ is a root of unity of order dividing $n/2$ for $w<f$ and of order $n$ for $w=f$. Since $\gamma\in K^\times$, we must have $\gamma\in \{\pm 1\}$. We cannot have $\gamma=1$ because $\xi_{2^{w+1}}\notin K$, so now suppose that $\gamma=-1$. 
The condition $-(\xi_{2^{w}}+2)=-\xi_{2^{w+1}}^2 \in K^{\times 2}$ holds in odd characteristic because for finite fields the product of two non-squares is a square (by Kummer theory, as $F(\sqrt{b})=F(\zeta_4)$ holds if $F$ is a finite field and  $b\in F^\times\setminus F^{\times  2}$). In characteristic $0$, the square roots of $-(\xi_{2^{w}}+2)$ are in $\Q(\zeta_{2^\infty})$ and not totally real. They cannot be in $K^\times$ if $K\cap \Q(\zeta_{2^\infty})$ is totally real. In the remaining case, $\zeta_4$ and $\xi_{2^{w+1}}$ generate the same quadratic extension of $K\cap \Q(\zeta_{2^\infty})$ hence of $K$, so by Kummer theory $-(\xi_{2^{w}}+2)$ is a square in $K^\times$.
\end{proof}

We rely on the following result, which is \cite[Theorem 2.4]{Rybowicz}, restated thanks to Lemma \ref{orders}:

\begin{thm}[Rybowicz]\label{Ryb2}
We have 
$$\frac{[K(G):K]}{|G^nK^{\times n}:K^{\times n}|}=\delta \cdot [K(\mu_n(GK^\times)):K]$$
where $\delta\in \{1,\frac{1}{2}\}$.
We have $\delta=\frac{1}{2}$ if and only if $a\in G^nK^{\times n}$ and, in the case $w<f$, additionally $1+\zeta_{2^w}\in GK^\times$ and $\ch(K)=0$ and $K\cap \Q(\zeta_{2^\infty})$ is totally real.
\end{thm}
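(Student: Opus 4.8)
The plan is to pass to the field $L=K(\zeta_4)$, where Theorem \ref{Kneserthm} applies, and then to descend back to $K$, accounting for each factor of $2$ by means of Schinzel's Lemma \ref{lemminoSchinzel} and Lemma \ref{orders}. I would first dispose of the case $\zeta_4\notin GK^\times$: by Remark \ref{Knesercondi} the element $1\pm\zeta_4$ cannot lie in $GK^\times$ either, so both conditions of Kneser's theorem hold over $K$ and $[K(G):K]=|GK^\times:K^\times|$; since then $\mu_n(GK^\times)=\mu_n(K^\times)=\{\pm1\}$, equation \eqref{exactall} gives the claim with $\delta=1$. From now on I assume $\zeta_4\in GK^\times$, so that $L=K(\zeta_4)\subseteq K(G)$.

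Because $\zeta_4\in L$ and $n$ is a power of $2$, Kneser's theorem over $L$ yields $[L(G):L]=|GL^\times:L^\times|$, and since $L\subseteq K(G)$ we have $[K(G):L]=[L(G):L]$, whence
$$[K(G):K]=[L:K]\cdot[L(G):L]=2\,|GL^\times:L^\times|\,.$$
Next I would expand $|GL^\times:L^\times|$ by \eqref{exactall} over $L$ and compare the two resulting indices with their analogues over $K$. The map $K^\times/K^{\times n}\to K^\times L^{\times n}/L^{\times n}$ has kernel generated by the class of $a$ (Lemma \ref{lemminoSchinzel}); restricting it to $G^nK^{\times n}/K^{\times n}$ shows that $|G^nK^{\times n}:K^{\times n}|=\epsilon\,|G^nL^{\times n}:L^{\times n}|$ with $\epsilon\in\{1,2\}$, where $\epsilon=2$ precisely when $a\notin K^{\times n}$ and $a\in G^nK^{\times n}$; by Lemma \ref{orders} the condition $a\notin K^{\times n}$ means $w\geq f$, or $\ch(K)=0$ and $K\cap\Q(\zeta_{2^\infty})$ is totally real. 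Combining these identities would reduce the theorem to the purely multiplicative statement
$$\delta=\frac{2\,|\mu_n(GL^\times):\mu_n(L^\times)|}{\epsilon\cdot[K(\mu_n(GK^\times)):K]}\in\Bigl\{1,\tfrac12\Bigr\}\,.$$

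The crux is the $2$-adic cyclotomic bookkeeping needed to evaluate this last expression. Writing $\mu_n(GK^\times)=\langle\zeta_{2^j}\rangle$, $\mu_n(L^\times)=\langle\zeta_{2^s}\rangle$ and $\mu_n(GL^\times)=\langle\zeta_{2^t}\rangle$ with $t\geq\max(j,s)$ and $s\geq 2$, one computes $[K(\mu_n(GK^\times)):K]=2^{1+\max(j-s,0)}$ and $|\mu_n(GL^\times):\mu_n(L^\times)|=2^{t-s}$, so that $\delta$ becomes an explicit power of $2$ divided by $\epsilon$. The main obstacle is that the two mechanisms producing a factor of $2$—extra $2$-power roots of unity appearing in $GL^\times$ but not in $GK^\times$ (that is, $t>j$), and the Schinzel collapse $\epsilon=2$—are not independent, and one must show that they interact so that $\delta$ never escapes $\{1,\tfrac12\}$. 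The decisive input is the identity $1+\zeta_{2^w}=\zeta_{2^{w+1}}\,\xi_{2^{w+1}}$, which for $w<f$ gives $a=\xi_{2^{w+1}}^n=(1+\zeta_{2^w})^n$; this is exactly what links the condition $a\in G^nK^{\times n}$ to the membership $1+\zeta_{2^w}\in GK^\times$ and produces the new root of unity $\zeta_{2^{w+1}}$ over $L$. The case split $w\geq f$ versus $w<f$ organizes this interaction: when $w\geq f$ the collapse is unaccompanied and yields $\delta=\tfrac12$ exactly when $a\in G^nK^{\times n}$, whereas when $w<f$ one needs in addition $1+\zeta_{2^w}\in GK^\times$ together with the real-subfield hypotheses furnished by Lemma \ref{orders} in order to force $\delta=\tfrac12$ rather than $\delta=1$.
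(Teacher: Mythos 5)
First, a point of comparison: the paper does not prove this statement at all --- it quotes it as \cite[Theorem 2.4]{Rybowicz}, merely restated by means of Lemma \ref{orders} --- so your attempt is really measured against Rybowicz's original argument, whose general route (descent from $L=K(\zeta_4)$, where Theorem \ref{Kneserthm} applies, combined with Schinzel's Lemma \ref{lemminoSchinzel}) your plan does follow. Your reductions are correct as far as they go: the case $\zeta_4\notin GK^\times$ is handled properly (though for the stated \emph{if and only if} you should also verify that the right-hand conditions fail there, e.g.\ that $a\in G^nK^{\times n}$ would force $\zeta_4\in GK^\times$, which you do not address); the identity $[K(G):K]=2\,|GL^\times:L^\times|$ is justified since $\zeta_4\in GK^\times$ gives $L\subseteq K(G)$ and both Kneser conditions hold over $L$; and the comparison $|G^nK^{\times n}:K^{\times n}|=\epsilon\,|G^nL^{\times n}:L^{\times n}|$ with your characterization of $\epsilon$ is a correct application of Lemma \ref{lemminoSchinzel}, because the image of $G^nK^{\times n}/K^{\times n}$ in $L^\times/L^{\times n}$ is exactly $G^nL^{\times n}/L^{\times n}$.

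However, the proof stops exactly where the theorem begins. After your reduction one has $\delta=2^{\,t-s-\max(j-s,0)}/\epsilon$, and the entire content of the statement is that this quantity lies in $\{1,\frac{1}{2}\}$ together with the precise criterion for $\delta=\frac{1}{2}$; you yourself label this ``the main obstacle'' and then replace the argument by a description of what \emph{must} happen. Concretely, what is missing is the control of $t$ (the $2$-power roots of unity gained over $L$) in terms of $j$, $s$ and the Schinzel element: one must show that a relation $\zeta_{2^t}=g\lambda$ with $g\in G$, $\lambda\in L^\times$ forces $g\in GK^\times\cap L^\times$, and, via the action of the nontrivial element of $\Gal(L/K)$, that an excess $t>\max(j,s)$ can occur only in conjunction with $a\in G^nK^{\times n}$; conversely, when $w<f$ and $\epsilon=2$ but $1+\zeta_{2^w}\notin GK^\times$, one must exhibit the compensating root of unity over $L$ that restores $\delta=1$ rather than $\frac{1}{2}$. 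Your assertion that for $w\geq f$ ``the collapse is unaccompanied'' is likewise stated, not proved. The identity $1+\zeta_{2^w}=\zeta_{2^{w+1}}\xi_{2^{w+1}}$ that you single out is indeed the right tool, but invoking it is not the same as carrying out the case analysis ($w>f$, $w=f$, $w<f$, with the totally-real dichotomy of Lemma \ref{orders}). As it stands, the proposal is a correct reduction of Rybowicz's theorem to an equivalent unproven assertion, not a proof of it.
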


\begin{thm}\label{better2}
Let $\delta$ be as in Theorem \ref{Ryb2} and set
$$H=\begin{cases} \mu_{2n}(GK^\times) & \text{if $w>f$}\\
\mu_{n}(GK^\times) & \text{if $w=f$ and $-\xi_{2^{w+1}}^n\notin G^nK^{\times n}$}\\
\langle 1+\zeta_{2^w}, \zeta_n\rangle & \text{if $w=f$ and $-\xi_{2^{w+1}}^n\in G^nK^{\times n}$}\\
\mu_{n}(GK^\times) & \text{if $w<f$ and $\delta=1$}\\
<1+\zeta_{2^w}>\mu_{n}(GK^\times)& \text{if $w<f$ and $\delta=1/2$}\,.\\
\end{cases}$$
Then $H$ is a subgroup of $GK^\times$ and we have
$$\frac{[K(G):K]}
{|GK^{\times}:K^{\times}|}= \frac{[K(H):K]}
{|HK^{\times}:K^{\times}|}\,.$$
\end{thm}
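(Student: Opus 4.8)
The plan is to reduce the whole statement to a single formula and then read it off in each of the five cases. Dividing the identity of Theorem~\ref{Ryb2} by the index decomposition \eqref{exactall}, $|GK^\times:K^\times|=|G^nK^{\times n}:K^{\times n}|\cdot|\mu_n(GK^\times):\mu_n(K^\times)|$, gives at once
$$\frac{[K(G):K]}{|GK^\times:K^\times|}=\frac{\delta\cdot[K(\mu_n(GK^\times)):K]}{|\mu_n(GK^\times):\mu_n(K^\times)|}\,,$$
exactly as in the proof of Corollary~\ref{corbetterp}. Since $\zeta_4\notin K^\times$ we have $\mu_n(K^\times)=\{\pm1\}$ and $\mu_n(GK^\times)=\langle\zeta_{2^s}\rangle$ for a unique $1\le s\le f$, so the right-hand side equals $\delta\cdot[K(\zeta_{2^s}):K]\cdot2^{1-s}$. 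Writing $|HK^\times:K^\times|=|H:H\cap K^\times|$ for the group $H$ of radicals under consideration, the task becomes to show $H\subseteq GK^\times$ and that its own ratio $[K(H):K]/|H:H\cap K^\times|$ reproduces this number.

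In the cases where $H$ is a group of roots of unity this is a direct computation: then $H=\langle\zeta_{2^t}\rangle$ and the ratio is $[K(\zeta_{2^t}):K]\cdot2^{1-t}$. For $w=f$ with $-\xi_{2^{w+1}}^n\notin G^nK^{\times n}$, and for $w<f$ with $\delta=1$, one has $H=\mu_n(GK^\times)$ and $\delta=1$, so $t=s$ and the ratios agree trivially. The case $w>f$ is the interesting one: here $a=-1$, and I would first note that $\delta=\tfrac12$ is equivalent to $-1\in G^nK^{\times n}$, which is in turn equivalent to $\zeta_{2^{f+1}}\in GK^\times$ (from $(gk)^n=-1$ one reads off a primitive $2^{f+1}$-th root of unity, and conversely). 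When $\delta=1$ this forces $\mu_{2n}(GK^\times)=\mu_n(GK^\times)$, giving $t=s$; when $\delta=\tfrac12$ one gets $s=f$ and $\mu_{2n}(GK^\times)=\langle\zeta_{2^{f+1}}\rangle$, so $t=f+1$. In the latter case the ratio for $H$ is $[K(\zeta_{2^{f+1}}):K]\cdot2^{-f}$, which equals $\tfrac12[K(\zeta_{2^f}):K]\cdot2^{1-f}$ because $w>f$ forces $\xi_{2^{f+1}}\in K^\times$ and hence $[K(\zeta_{2^{f+1}}):K]=[K(\zeta_{2^f}):K]=2$.

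The heart of the matter is the two cases involving the single radical $1+\zeta_{2^w}$, where $\delta=\tfrac12$. The computation rests on the identities $1+\zeta_{2^w}=\zeta_{2^{w+1}}\xi_{2^{w+1}}$ and $(1+\zeta_{2^w})^2=\zeta_{2^w}(\xi_{2^w}+2)$, together with $(1+\zeta_{2^w})^n=a$ for the element $a$ of Lemma~\ref{lemminoSchinzel} (using $\xi_{2^w}+2=\xi_{2^{w+1}}^2\in K^\times$). First, these yield $\zeta_{2^w}\in\mu_n(GK^\times)$, hence $w\le s$, so $1+\zeta_{2^w}\in K(\zeta_{2^s})$ and therefore $K(H)=K(\zeta_{2^s})$, i.e.\ the degree is \emph{unchanged}. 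Next, since $(1+\zeta_{2^w})^2\equiv\zeta_{2^w}\pmod{K^\times}$ lies in $\langle\zeta_{2^s}\rangle K^\times$, the class of $1+\zeta_{2^w}$ over $\langle\zeta_{2^s}\rangle K^\times$ has order $1$ or $2$; it has order exactly $2$, for otherwise $a=(1+\zeta_{2^w})^n\in K^{\times n}$, contradicting Lemma~\ref{orders} (whose hypothesis, $w\ge f$ or $\ch K=0$ with $K\cap\Q(\zeta_{2^\infty})$ totally real, is precisely the condition built into the criterion $\delta=\tfrac12$ of Theorem~\ref{Ryb2}). Thus the index doubles while the degree stays fixed, so $[K(H):K]/|H:H\cap K^\times|=[K(\zeta_{2^s}):K]\cdot2^{-s}$ matches $\delta\cdot[K(\zeta_{2^s}):K]\cdot2^{1-s}$ with $\delta=\tfrac12$. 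The same identities supply the membership $H\subseteq GK^\times$ (for $w=f$ one extracts $\zeta_{2^f}=\zeta_n\in GK^\times$ from the odd power $\zeta_{2^f}^{1-2j}=(1+\zeta_{2^f})^2\zeta_{2^f}^{-2j}/(\xi_{2^f}+2)$, and then $1+\zeta_{2^f}\in GK^\times$ follows).

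I expect the main obstacle to be exactly this index-versus-degree bookkeeping in the $1+\zeta_{2^w}$ cases: one must show that adjoining the single radical $1+\zeta_{2^w}$ contributes a factor $2$ to $|HK^\times:K^\times|$ but nothing to $[K(H):K]$, and that this factor is present precisely when Theorem~\ref{Ryb2} records $\delta=\tfrac12$. The delicacy is that $1+\zeta_{2^w}$ is not a root of unity, so the argument cannot be purely cyclotomic; it is Lemma~\ref{orders} (the order of $a$ modulo $K^{\times n}$) that bridges the multiplicative order of $1+\zeta_{2^w}$ modulo $K^\times$ and the arithmetic conditions defining $\delta$, and verifying that the hypotheses of that lemma coincide with the $\delta=\tfrac12$ criterion in each subcase ($w>f$, $w=f$, $w<f$) is where the care is needed.
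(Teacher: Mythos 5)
Your proof is correct, but the mechanism of the final equality differs from the paper's. The paper never evaluates either ratio inside this proof: after establishing $H\subseteq GK^\times$ (for $w=f$, by exactly your squaring argument extracting the odd power $\zeta_n^{1-2j}$), it shows $\mu_n(HK^\times)=\mu_n(GK^\times)$ and that $H$ satisfies the $\delta=\tfrac12$ criterion of Theorem \ref{Ryb2} precisely when $G$ does ($\delta_H=\delta_G$, checked case by case), and then applies Theorem \ref{Ryb2} together with Remark \ref{remexact} \emph{twice}, once to $G$ and once to $H$, so the two ratios agree formally; the numerical evaluation is postponed to Lemma \ref{lembetter2}. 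You instead evaluate both sides: you reduce the $G$-side to the closed form $\delta\cdot[K(\zeta_{2^s}):K]\cdot 2^{1-s}$ and compute $[K(H):K]$ and $|HK^\times:K^\times|$ directly for each explicit $H$, with Lemma \ref{orders} supplying the key step (if $1+\zeta_{2^w}\in\langle\zeta_{2^s}\rangle K^\times$ then, since $s\le f$ forces $\zeta_{2^s}^n=1$, one would get $a=(1+\zeta_{2^w})^n\in K^{\times n}$, contradicting the lemma under exactly the hypotheses that the $\delta=\tfrac12$ criterion guarantees), so the class of $1+\zeta_{2^w}$ doubles the index while $K(H)=K(\zeta_{2^s})$ leaves the degree unchanged. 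Your computations check out in all five cases, including the correct identification $\delta=\tfrac12\Leftrightarrow\zeta_{2n}\in GK^\times$ for $w>f$ and the cyclotomic degree match $[K(\zeta_{2^{f+1}}):K]=[K(\zeta_{2^f}):K]=2$ via $\xi_{2^{f+1}}\in K^\times$ (the only case where you cannot keep the degree symbolic, a nice economy that spares you the paper's $w'$-bookkeeping here). What the paper's route buys is uniformity and reuse: the $\delta_H=\delta_G$ transfer is short, and the explicit degree/index computations are needed anyway in Lemma \ref{lembetter2} for Definition \ref{Delta}, so they are not duplicated. What your route buys is self-containedness and transparency: you need not re-verify Rybowicz's criterion for $H$, you exhibit concretely where the factor $\tfrac12$ comes from, and in effect you prove the $\delta=\tfrac12$ cases of Lemma \ref{lembetter2} along the way.
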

\begin{proof}
For $w=f$, we first prove that $H=\langle 1+\zeta_{2^w}, \zeta_n\rangle \cap GK^\times$. Observe that $\zeta_{2^{w+1}}\xi_{2^{w+1}}=1+\zeta_{2^w}$. If $-\xi_{2^{w+1}}^n\notin G^nK^{\times n}$, there is no integer $i$ such that $(1+\zeta_{2^w})\zeta_n^i\in GK^\times$ and hence $\langle 1+\zeta_{2^w}, \zeta_n\rangle \cap GK^\times=\mu_n(GK^\times)$. Else, fix $i$ such that $(1+\zeta_{2^w})\zeta_n^i\in GK^\times$. Considering that $\xi_{2^{w+1}}^2\in K^\times$, we deduce that $\zeta_n\in GK^\times$ and hence $1+\zeta_{2^w}\in GK^\times$ and we conclude.

So in all cases $H$ is a subgroup of $GK^\times$ such that $\mu_n(HK^\times)=\mu_n(GK^\times)$. We now prove that $\delta_G=\delta_{H}$.
If $w>f$, this is because $-1\in G^nK^{\times n}$ is equivalent to $\zeta_{2n}\in GK^{\times}$.
If $w=f$, this is because $-\xi_{2^{w+1}}^n\in G^nK^{\times n}$ is equivalent to $(1+\zeta_{2^w})\in \mu_n(\overline{K}^\times) GK^{\times}$ and we have $H=\langle 1+\zeta_{2^w}, \zeta_n\rangle \cap GK^\times$. If $w<f$, we observe that $\delta_G=1$ implies $\delta_{H}=1$, so suppose that $\delta_G=\frac{1}{2}$. We clearly have $1+\zeta_{2^w}\in HK^{\times}$, so we are left to prove that $\xi_{2^{w+1}}^n\in H^nK^{\times n}$. This is equivalent to $\xi_{2^{w+1}}\in <1+\zeta_{2^w},\zeta_n>K^{\times n}$ and we may conclude because $\xi_{2^{w+1}}=\zeta_{2^{w+1}}^{-1}(1+\zeta_{2^w})$ and $\zeta_{2^{w+1}}$ is a power of $\zeta_{n}$.

In view of Remark \ref{remexact}, from Theorem \ref{Ryb2} (applied to $G$ and to $H$), as $\tilde{H}:=\mu_n(GK^\times)=\mu_n(HK^\times)$ we have 
$$\frac{[K(G):K]}
{|GK^{\times}:K^{\times}|}=\delta \cdot \frac{[K(\tilde{H}):K]}{|\tilde{H}:\mu_n(K^{\times})|}=\frac{[K(H):K]}
{|HK^{\times}:K^{\times}|}\,.$$
\end{proof}

\begin{lem}\label{lembetter2}
With the notation of Theorem \ref{better2}, we let $m$ be the largest positive integer such that $\zeta_{2^m}\in H$. If $m=1$ then we have 
$\frac{[K(H):K]}{|HK^\times:K^\times|}=1$, while if $m\geq 2$ then we have 
$$\frac{[K(H):K]}{|HK^\times:K^\times|}=\begin{cases} 
2^{2-m} & \text{if $w>f$ or if $w=f$ and $-\xi_{2^{w+1}}^n\notin G^nK^{\times n}$}\\
2^{1-f} & \text{if $w=f$ and $-\xi_{2^{w+1}}^n\in G^nK^{\times n}$}\\
2^{2-\min(w',m)} & \text{if $w<f$, $\delta=1$}\\
2^{1-\min(w,m)} & \text{if $w<f$, $\delta=1/2$}\end{cases}$$
where, if $w$ is finite, $w'$ is the largest positive integer such that $\zeta_{2^{w'}}\in K(\zeta_4)^\times$.
In the last case we have $m=\max(w, \overline{m})$ where $\overline{m}$ is the largest integer such that $\zeta_{2^{\overline{m}}}\in \mu_n(GK^\times)$.
\end{lem}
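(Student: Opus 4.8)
The plan is to compute, for the explicit group $H$ produced by Theorem \ref{better2}, the numerator $[K(H):K]$ and the denominator $|HK^\times:K^\times|$ separately, and for this I only need two elementary facts about $2$-power roots of unity over $K$. Since $\zeta_4\notin K^\times$ we have $\mu_{2^\infty}(K^\times)=\{\pm1\}$, so whenever $\zeta_{2^s}\in H$ with $s\ge 1$ the class of $\zeta_{2^s}$ has order exactly $2^{s-1}$ in $HK^\times/K^\times$. Moreover $\Gal(K(\zeta_{2^\infty})/K(\zeta_4))$ embeds into $1+4\Z_2\cong\Z_2$ and is therefore procyclic, so with $w'$ the largest integer such that $\zeta_{2^{w'}}\in K(\zeta_4)^\times$ I get $[K(\zeta_{2^s}):K]=2^{\max(s-w',0)+1}$ for every $s\ge 2$. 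I also record that $w\le w'$, because $\xi_{2^w}\in K^\times$ forces $K(\zeta_{2^w})=K(\zeta_4)$.

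First I would dispose of the three cases in which $H$ is a cyclic group of roots of unity: $H=\mu_{2n}(GK^\times)$ when $w>f$, and $H=\mu_n(GK^\times)$ both when $w=f$ with $-\xi_{2^{w+1}}^n\notin G^nK^{\times n}$ and when $w<f$ with $\delta=1$. Here $H=\langle\zeta_{2^m}\rangle$, so the two facts give $|HK^\times:K^\times|=2^{m-1}$ and, for $m\ge 2$, degree $2^{\max(m-w',0)+1}$, hence ratio $2^{2-\min(w',m)}$; this is already the line $w<f,\,\delta=1$, while in the first two cases $m\le w\le w'$ forces $\min(w',m)=m$ and the ratio becomes $2^{2-m}$. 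The value $m=1$ has to be separated off: it occurs only in these cyclic cases, where then $H=\{\pm1\}\subseteq K^\times$ and the ratio is trivially $1$ (whereas the $m\ge 2$ formulas would wrongly give $2$, since $[K(\zeta_2):K]=1$).

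The substance of the lemma is the two cases in which $H$ involves the non-root-of-unity $1+\zeta_{2^w}$: the line $w=f$, $-\xi_{2^{w+1}}^n\in G^nK^{\times n}$, where by the proof of Theorem \ref{better2} one has $H=\langle 1+\zeta_{2^w},\zeta_n\rangle$, and the line $w<f$, $\delta=1/2$, where $H=\langle 1+\zeta_{2^w}\rangle\mu_n(GK^\times)$. The computation rests on $1+\zeta_{2^w}=\zeta_{2^{w+1}}\xi_{2^{w+1}}$ and on $(1+\zeta_{2^w})^2=\zeta_{2^w}(\xi_{2^w}+2)\equiv\zeta_{2^w}\pmod{K^\times}$, the last factor lying in $K^\times$. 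In the case $w<f$ the hypothesis $\delta=1/2$ supplies $1+\zeta_{2^w}\in GK^\times$, whence $\zeta_{2^w}\in GK^\times$ and so $\overline m\ge w$; writing $H=\langle 1+\zeta_{2^w},\zeta_{2^{\overline m}}\rangle$ I get $K(H)=K(\zeta_{2^{\overline m}})$ at once (as $1+\zeta_{2^w}\in K(\zeta_{2^{\overline m}})$), and the totally real hypothesis identifies $w'=w$, giving degree $2^{\overline m-w+1}$. For the index I would present $HK^\times/K^\times=\langle u,v\rangle$ with $u=[1+\zeta_{2^w}]$, $v=[\zeta_{2^{\overline m}}]$, where $v$ has order $2^{\overline m-1}$ and $u^2=v^{2^{\overline m-w}}$, so that $u$ has order $2^w$ and, once $u\notin\langle v\rangle$ is shown, the index is $2^{\overline m}$; the ratio is then $2^{1-w}=2^{1-\min(w,m)}$ with $m=\overline m=\max(w,\overline m)$. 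The case $w=f$ is the same computation with $\overline m=w$: the quotient is cyclic of order $2^f$ (since $v=u^2$), $K(H)=K(\zeta_{2^f})$ has degree $2$, and the ratio is $2^{1-f}$.

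The real obstacle is concentrated in the case $w<f$, $\delta=1/2$ (the case $w=f$ being cyclic, hence settled by order counting), and is twofold. First I must pin down the roots of unity in $H$ exactly, so as to prove $m=\max(w,\overline m)$ and exclude any larger $2$-power root of unity: an element $(1+\zeta_{2^w})^a\zeta_{2^{\overline m}}^b$ carries a factor $\xi_{2^{w+1}}^a$, a real number distinct from $\pm1$ (as $\xi_{2^{w+1}}^2=\xi_{2^w}+2\neq1$), so $\xi_{2^{w+1}}^a$ is a root of unity only for $a=0$, which confines the roots of unity of $H$ to $\langle\zeta_{2^{\overline m}}\rangle$. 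Second I must prove $u\notin\langle v\rangle$, i.e. that $1+\zeta_{2^w}$ is not a power of $\zeta_{2^{\overline m}}$ times an element of $K^\times$; otherwise $\xi_{2^{w+1}}$ times a $2$-power root of unity would lie in $K\cap\Q(\zeta_{2^\infty})$, and total realness would force that root of unity to be $\pm1$, contradicting $\xi_{2^{w+1}}\notin K$. Both points use exactly the $\ch(K)=0$ and totally real conditions carried by $\delta=1/2$ when $w<f$; with them in hand, assembling the four lines of the table is bookkeeping.
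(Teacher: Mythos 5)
Your proposal is correct and takes essentially the same route as the paper's proof: a case-by-case computation of $[K(H):K]$ via the cyclotomic degree formula $2^{1+\max(s-w',0)}$ and of $|HK^\times:K^\times|$ via the relation $(1+\zeta_{2^w})^2\in\zeta_{2^w}K^\times$, with the key non-degeneracy ($u\notin\langle v\rangle$, i.e.\ no $i$ with $(1+\zeta_{2^w})\zeta_n^i\in K^\times$) extracted from the totally real hypothesis. The only deviations are cosmetic and if anything streamline the original: you observe upfront that $\delta=1/2$ with $w<f$ forces $\zeta_{2^w}\in GK^\times$ and hence $\overline{m}\geq w$ (so $m=\overline{m}$ and the paper's subcase $\overline{m}<w$ is vacuous), you prove $u\notin\langle v\rangle$ directly by total realness where the paper cites Lemma \ref{orders}, and you settle $m=1$ by noting $H=\{\pm1\}\subseteq K^\times$ rather than invoking Theorem \ref{Kneserthm}.
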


We observe the following: in characteristic $0$, we have $w'=w$ or $w'=w+1$ and the latter case holds if and only if $\ch(K)=0$ and $K\cap \Q(\zeta_{{2^\infty}})$ is not totally real; in odd characteristic $p\equiv 3 \bmod 4$, $w'$ is the $2$-adic valuation of $p^2-1$. 

\begin{proof}
If $m=1$, then  $[K(H):K]=|HK^\times:K^\times|$ by Theorem \ref{Kneserthm} and Remark \ref{Knesercondi} so suppose that $m\geq 2$.

We remark that $m\leq w$ if $w>f$ (because  $m\leq f+1$) or if $w=f$ and $-\xi_{2^{w+1}}^n\notin G^nK^{\times n}$. In these cases, we have $[K(H):K]=2$ and $|HK^\times:K^\times|=|H:\mu_{2n}(K^\times)|=2^{m-1}$.

If $w=f$ and $-\xi_{2^{w+1}}^n\in G^nK^{\times n}$, then $K(H)=K(\zeta_4)$.
We conclude because $(1+\zeta_{2^w})^2=\zeta_{2^w}(2+\xi_{2^w})$ and $\zeta_{2^w}$ are in the same class modulo $K^\times$ and hence   
$$|HK^\times: K^\times|=2|\langle \zeta_n\rangle K^\times: K^\times|=n\,.$$

Finally suppose that $w<f$. 
Since $1+\zeta_{2^w}\in K(\zeta_4)^\times$, we have 
$$[K(H):K]=2^{1+\max(m-w',0)}\,.$$

If $\delta=1$, we may conclude because we have $|HK^\times:K^\times|=2^{m-1}$.
If $\delta=1/2$ (in particular, $\ch(K)=0$ and $K\cap \Q(\zeta_{{2^\infty}})$ is  totally real), recall that $(1+\zeta_{2^w})^2\in \zeta_{2^w} K^\times\setminus K^\times $. By Lemma \ref{orders} we know that $(1+\zeta_{2^w})^n=\xi_{2^{w+1}}^{n}\notin K^{\times n}$ so there is no integer $i$ such that 
$(1+\zeta_{2^w})\zeta_{n}^i\in K^\times$.
We deduce that 
$$|HK^\times: K^\times|=2 |\langle \zeta_{2^w}\rangle \mu_n(GK^\times)K^\times: K^\times|\,.$$
To conclude that $|HK^\times: K^\times|=2^m$
we prove that $m=\max(w,\overline{m})$. 
For $\overline{m}\geq w$, $H$ is contained in $K(\zeta_{2^{\overline{m}}})^\times$ and we conclude because this group does not contain $\zeta_{2^{\overline{m}+1}}$. 
For $\overline{m}<w$, $H$ is contained in $K(\zeta_{4})^\times$ and we conclude because $\zeta_{2^{w+1}}\notin K(\zeta_{4})^\times$.
\end{proof}

Remark that $K(\zeta_4)=1K+\zeta_4 K$. If $w>f$ or if $w=f$ and $-\xi_{2^{w+1}}^n\notin G^n K^{\times n}$, we only have entanglement if $m\geq 3$. Since $m \leq f+1$ and $m\leq f$ for $w=f$, the $K$-linear relations for $K(G)$ (beyond those stemming from the group $GK^\times/K^\times$) are generated by those expressing 
$$\zeta_{2^3},\ldots, \zeta_{2^m}\in 1 K+\zeta_4K\,.$$
If $w=f$ and $-\xi_{2n}^n\in G^n K^{\times n}$, there is also an additional entanglement (as there is the loss of a factor $2$ in the degree $[K(G):K]$) which is due to $1+\zeta_{2^w}\in GK^\times \cap K(\zeta_4)^\times$, and it is expressed by the $K$-linear relation
\begin{equation}\label{eq:specialent}
1+\zeta_{2^w}\in 1 K+ \zeta_4 K\,.    
\end{equation}
Finally, suppose that $w<f$. If $\delta=1$, then the entanglement is similarly due to
\begin{equation}\label{eq:normalent}
\zeta_{2^3},\ldots \zeta_{2^{m'}}\in 1K +\zeta_4 K
\end{equation}
where $m'$ is the largest positive integer less or equal to $m$ such that $\zeta_{2^{m'}}\in K(\zeta_4)^\times$.
If $\delta=1/2$, the  entanglement is similarly explained by \eqref{eq:normalent} and \eqref{eq:specialent}.

\section{The general case}

Let $n$ be a positive integer coprime to the characteristic of $K$. If $n=1$ then we have 
$$[K(G):K]=|GK^\times:K^\times|=1$$
so we suppose that $n\geq 2$ and write 
$n=\prod_{p} p^{v_p}$ for the prime factorization of $n$, where $p$ varies among the prime divisors of $n$. Let $z$ be the product of the odd primes $p$ such that $\zeta_p\notin K^\times$ and $\zeta_p\in GK^\times$.
We set $n_p:=p^{v_p}$ and $G_p=G^{n/n_p}$. In this way, $n_p$ is the smallest positive integer such that $G_p^{n_p}\in K^\times$. Since $n_p$ is a prime power, we may apply the results in the previous sections to study $G_p$.

\begin{rem}\label{split}
We clearly have
$$|GK^\times:K^\times|=\prod_p |G_pK^\times:K^\times|$$
where $p$ varies among the prime divisors of $n$, and the same holds if we replace $K$ by a finite extension. We also have
$$[K(G):K(\zeta_z)]=\prod_p [K(\zeta_z,G_p):K(\zeta_z)]$$
because by Corollary \ref{corbetterp} (for $p$ odd), by Theorem \ref{Kneserthm} (for $p=2$ and $\zeta_4\in K^\times$ or $4\nmid n$) and by 
Theorem \ref{better2} and Lemma \ref{lembetter2} (in the remaining case) the factors on the right hand side are a power of $p$ and hence the fields $K(\zeta_z,G_p)$, whose compositum is $K(G)$, are linearly disjoint over $K(\zeta_z)$.
\end{rem}

The following result is \cite[Theorem 2]{Schinzel-ab}:

\begin{thm}[Schinzel's theorem on abelian radical extensions]\label{Schinzel-abelian}
Let $n\geq 1$ be not divisible by $\ch(K)$. 
If $a\in K^\times$, the extension $K(\zeta_{n}, \sqrt[n]{a})/K$ is abelian if and only if $a^{m}=b^{n}$ holds for some $b\in K^{\times}$ and for some $m\mid n$ such that $\zeta_{m}\in K$.
\end{thm}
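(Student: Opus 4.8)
The plan is to fix an $n$-th root $\alpha=\sqrt[n]{a}$ and work inside $M=K(\zeta_n,\alpha)$, which is the splitting field of $X^n-a$ over $K$ and hence Galois (and separable, since $n$ is prime to $\ch K$). Each $\sigma\in\Gal(M/K)$ is determined by integers $c(\sigma)\in(\Z/n)^\times$ and $d(\sigma)\in\Z/n$ via $\sigma(\zeta_n)=\zeta_n^{c(\sigma)}$ and $\sigma(\alpha)=\zeta_n^{d(\sigma)}\alpha$, giving a faithful embedding of $\Gal(M/K)$ into the affine group. Computing $\sigma\tau$ and $\tau\sigma$ on $\alpha$, and using that the $\zeta_n$-components always commute (they land in the abelian group $(\Z/n)^\times$), one finds that $\Gal(M/K)$ is abelian if and only if
\begin{equation*}
d(\tau)\bigl(c(\sigma)-1\bigr)\equiv d(\sigma)\bigl(c(\tau)-1\bigr)\pmod n\qquad\text{for all }\sigma,\tau. \tag{$\ast$}
\end{equation*}
Thus the entire statement reduces to translating each side of the equivalence into $(\ast)$.

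For the \emph{if} direction I would start from $a^m=b^n$ with $b\in K^\times$, $m\mid n$ and $\zeta_m\in K$. From $(\alpha^m)^n=a^m=b^n$ one gets $\alpha^m=\zeta_n^{\,j}b$ for some $j$; applying $\sigma$ and using $b\in K^\times$ gives $m\,d(\sigma)\equiv j(c(\sigma)-1)\pmod n$, while $\zeta_m\in K$ forces $c(\sigma)\equiv1\pmod m$, say $c(\sigma)-1=m\,e(\sigma)$. Writing the difference of the two sides of $(\ast)$ as $m\bigl(d(\tau)e(\sigma)-d(\sigma)e(\tau)\bigr)$ and noting that $m\,d(\sigma)\equiv jm\,e(\sigma)$ yields $d(\sigma)\equiv j\,e(\sigma)\pmod{n/m}$, the bracket vanishes modulo $n/m$, so $(\ast)$ holds. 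This is the routine verification I would leave to the reader.

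The \emph{only if} direction carries the real content, and I would organise it around the exponent $m:=$ order of $a$ in $K^\times/K^{\times n}$. By definition $m\mid n$, and $a^m=b^n$ for some $b\in K^\times$ holds automatically, so the only thing left to prove is $\zeta_m\in K$. Here I would exploit $(\ast)$: setting $g=\gcd\bigl(n,\{c(\sigma)-1:\sigma\}\bigr)$, a Bézout relation $g\equiv\sum_\rho k_\rho(c(\rho)-1)\pmod n$ lets me define $s=-\sum_\rho k_\rho d(\rho)$ and check, using $(\ast)$ term by term, that $g\,d(\sigma)\equiv -s(c(\sigma)-1)\pmod n$ for every $\sigma$. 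This congruence says exactly that $\zeta_n^{\,s}\alpha^{g}$ is $\Gal(M/K)$-invariant, hence lies in $K^\times$, so $a^{g}\in K^{\times n}$ and therefore $m\mid g$.

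The main obstacle, and the step I expect to need most care, is the final identification $\zeta_m\in K$: one must observe that $g$ is precisely the order of $\mu_n\cap K^\times$, because $\zeta_n^{\,x}\in K^\times$ holds iff $(c(\sigma)-1)x\equiv0\pmod n$ for all $\sigma$, i.e. iff $(n/g)\mid x$, so $\mu_n\cap K^\times=\mu_g$. Granting this, $m\mid g$ together with $\zeta_g\in K$ forces $\zeta_m\in\mu_g\subseteq K$, completing the argument. I would stress that taking the exponent over $K$ (rather than $[M:K(\zeta_n)]$) is essential: the example $K=\Q$, $n=4$, $a=-4$, where $-4=(1+\zeta_4)^4$, shows that the order over $K(\zeta_n)$ can be strictly smaller and need not satisfy $a^m=b^n$ with $b\in K$.
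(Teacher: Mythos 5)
Your proposal is correct, but it cannot match the paper's own proof for the simple reason that the paper gives none: Theorem \ref{Schinzel-abelian} is quoted as a black box from \cite[Theorem 2]{Schinzel-ab}, so what you have written is a genuinely self-contained substitute, essentially a reconstruction of the standard (Schinzel-style) argument. I checked the steps: the embedding $\sigma\mapsto (c(\sigma),d(\sigma))$ into $\Z/n\Z\rtimes(\Z/n\Z)^\times$ correctly reduces abelianness of $\Gal(K(\zeta_n,\alpha)/K)$ to the symmetry condition $(\ast)$, since commutation on $\zeta_n$ is automatic and $\zeta_n,\alpha$ generate. In the \emph{if} direction, $\alpha^m=\zeta_n^j b$ gives $m\,d(\sigma)\equiv j(c(\sigma)-1)\pmod n$, and with $c(\sigma)-1=m\,e(\sigma)$ (well defined modulo $n/m$, which is all you use) one gets $d(\sigma)\equiv j\,e(\sigma)\pmod{n/m}$, so the bracket $d(\tau)e(\sigma)-d(\sigma)e(\tau)$ vanishes modulo $n/m$ and $(\ast)$ holds; this is sound. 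In the \emph{only if} direction, the Bézout combination yielding $g\,d(\sigma)\equiv -s(c(\sigma)-1)\pmod n$ does follow from $(\ast)$ term by term, the element $\zeta_n^{\,s}\alpha^{g}$ is then Galois-invariant, hence $a^{g}\in K^{\times n}$ and $m\mid g$, and your identification $\mu_n(K^\times)=\mu_g$ for $g=\gcd\bigl(n,\{c(\sigma)-1\}\bigr)$ is exactly right because $\zeta_n^{\,x}\in K^\times$ iff $gx\equiv 0\pmod n$. Note that your proof even delivers slightly more than the stated existential form: the \emph{minimal} $m$, namely the order of $a$ in $K^\times/K^{\times n}$, already satisfies $\zeta_m\in K$. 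Your closing example $K=\Q$, $n=4$, $a=-4=(1+\zeta_4)^4$ is apt --- it is the same $1\pm\zeta_4$ phenomenon that underlies the second condition in Kneser's Theorem \ref{Kneserthm} and the group $GK^\times\cap\sqrt{K^\times}$ in Theorem \ref{mainthm} --- and it correctly shows why the exponent must be taken over $K$ and not over $K(\zeta_n)$. In short: the citation buys the paper brevity; your argument buys self-containedness at the cost of about a page, with no gaps that I can find.
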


\begin{proof}[Proof of Theorem \ref{mainthm} if $n$ is odd]
By Remark \ref{split} we can write
$$\frac{[K(G):K]}{|GK^\times:K^\times|}=[K(\zeta_z):K]\cdot \prod_{p\mid n}
    \frac{[K(\zeta_z,G_p):K(\zeta_z)]}{|G_pK^\times:K^\times|}\,.
    $$

We have $G_pK^\times \cap K(\zeta_z)^\times \subseteq \mu_{p^{v_p}}(G_p K^\times)$ because $\zeta_p\notin K^\times$ and the extension $K(\zeta_z)/K$ is abelian (we apply Theorem \ref{Schinzel-abelian}).
    
By Theorem \ref{Kneserthm} (in view of Remark \ref{Knesercondi}) we then have 
$$[K(\zeta_z,G_p):K(\zeta_z)]=|G_pK(\zeta_z)^\times:K(\zeta_z)^\times|=\frac{|G_pK^\times:K^\times|}{|\mu_{p^{v_p}}(G_p K^\times)\cap K(\zeta_z)^\times:\mu_{p^{v_p}}( K^\times)|}\,.
    $$
We may then conclude remarking that 
$$|\mu_{n}(G K^\times)\cap K(\zeta_z)^\times:\mu_{n}(K^\times)|=\prod_{p\mid n}|\mu_{p^{v_p}}(G_p K^\times)\cap K(\zeta_z)^\times: \mu_{p^{v_p}}( K^\times)|\,.$$
\end{proof}

\begin{defi}\label{Delta}
We set $\Delta=0$ if $\zeta_4\in K^\times$ or $4\nmid n$. In the remaining case, we let 
$H'$ be the group $H$ from Theorem \ref{better2} and Lemma \ref{lembetter2} for $G^{n'}$ over $K(\zeta_z)$ and set 
\begin{equation}\label{magicDelta}
2^{-\Delta}:=\frac{[K(\zeta_z, H'):K(\zeta_z)]}{|H'K(\zeta_z)^\times:K(\zeta_z)^\times|}\,.
\end{equation}
\end{defi}

\begin{proof}[Proof of Theorem \ref{mainthm} if $n$ is even]
Call $G_{P}=\prod_{p\mid n, p\neq 2} G_p$. Remarking that $\zeta_z\in G_P$, we can write     $$\frac{[K(G):K]}{|GK^\times:K^\times|}=[K(\zeta_z):K]\cdot 
\frac{[K(G_{P}):K(\zeta_z)]}{|G_{P}K^\times:K^\times|} \cdot
    \frac{[K(\zeta_z,G_2):K(\zeta_z)]}{|G_2 K^\times:K^\times|}
    $$
By the odd case of Theorem \ref{mainthm} we have
$$\frac{[K(G_{P}):K]}{|G_{P}K^\times:K^\times|}=\frac{[K(\zeta_z):K]}{|\mu_{n/n_2}(G_P K^\times)\cap K(\zeta_{z})^\times: \mu_{n/n_2}( K^\times)|}\,.$$
Since $\mu_{n/n_2}(G K^\times)=\mu_{n/n_2}(G_P K^\times)$ and $\mu_{2n_2}(G K^\times)=\mu_{2n_2}(G_2 K^\times)$ and $GK^\times \cap \sqrt{K^\times}=G_2K^\times \cap \sqrt{K^\times}$ we are left to prove that
$$\frac{[K(\zeta_z,G_2):K(\zeta_z)]}{|G_2 K^\times:K^\times|}=\frac{2^{-\Delta}}{|\mu_{2n_2}(G_2 K^\times)(G_2K^\times\cap\sqrt{K^\times})\cap K(\zeta_z)^\times: K^\times|}\,.$$
As $K(\zeta_z)/K$ is abelian, by Theorem \ref{Schinzel-abelian}
we have   
$$G_2 K^\times\cap K(\zeta_z)^\times=\mu_{2n_2}(G_2 K^\times)(G_2K^\times\cap\sqrt{K^\times})\cap K(\zeta_z)^\times$$
so it suffices to show that 
$$\frac{[K(\zeta_z,G_2):K(\zeta_z)]}{|G_2 K(\zeta_z)^\times:K(\zeta_z)^\times|}=2^{-\Delta}\,,$$
which is a consequence of Theorem \ref{better2} and \eqref{magicDelta} (or of Theorem \ref{Kneserthm} if $\zeta_4\in K^\times$ or $4\nmid n$).
\end{proof}

\begin{proof}[Proof of Theorem \ref{divisibility}]
Equivalently, we prove that   
$[K(\zeta_z, G):K(\zeta_z)]$ divides 
$\frac{1}{z} \cdot |GK^{\times}:K^{\times}|$.
Letting $p$ be a prime number, by Remark \ref{split} we have 
$$[K(\zeta_z, G):K(\zeta_z)]=\prod_{p\mid n} [K(\zeta_z, G_p):K(\zeta_z)]$$
and 
$$\frac{1}{z} \cdot |GK^{\times}:K^{\times}|=\prod_{p\mid z}\frac{1}{p} \cdot |G_pK^{\times}:K^{\times}| \prod_{p\mid n, p\nmid z} |G_pK^{\times}:K^{\times}|\,.$$
For $p\mid z$ the degree 
$[K(\zeta_z, G_p):K(\zeta_z)]$ divides 
$\frac{1}{p} \cdot |GK^{\times}:K^{\times}|$ by Corollary \ref{corbetterp}. If $p\neq 2$ and  $p\nmid z$, or if $p=2$ and $\zeta_4\in K^\times$ or $4\nmid n$ we have 
$$[K(\zeta_z, G_p):K(\zeta_z)]=|G_pK(\zeta_z)^{\times}:K(\zeta_z)^{\times}|$$ by Theorem \ref{Kneserthm} (in view of Remark \ref{Knesercondi}) and this index divides $|G_pK^{\times}:K^{\times}|$. For $p=2$, $\zeta_4\notin K^\times$ and $4\mid n$ the degree $[K(\zeta_z, G_2):K(\zeta_z)]$ divides $|G_2 K(\zeta_z)^{\times}:K(\zeta_z)^{\times}|$ by Theorem \ref{better2} and \eqref{magicDelta}.
\end{proof}

We set $\mu_\infty=\cup_{m\geq 1}\mu_m$. We conclude by proving a result that shows the eventual maximal growth of certain radical extensions:

\begin{thm}\label{eventual}
For every positive integer $N$ let $R_N$ be a subgroup of $\overline{K}^\times$ such that the index $|R_NK^\times:K^\times|$ divides $N^c$ for some constant $c$, $R_1\in K^\times$ and such that $R_{N}^M=R_{N/M}$ holds for every $M\mid N$. Suppose that there are only finitely many primes $p$ such that $\zeta_p\notin K^\times$ and $\zeta_p\in R_N K^\times$ for some $N$, and call $z$ their product. Moreover, suppose that 
$$|\mu_{\infty}(K(\zeta_{4z})^\times):\mu_{\infty}(K^\times)|$$
is finite.
Then there exists a positive integer $N_0$ such that 
$$\frac{[K(R_N):K]}{|R_NK^\times:K^\times|}=\frac{[K(R_{\gcd(N,N_0)}):K]}{|R_{\gcd(N,N_0)}K^\times:K^\times|}\,.$$
\end{thm}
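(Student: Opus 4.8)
The plan is to apply Theorem~\ref{mainthm} to each group $G=R_N$ and to prove that every factor in the resulting closed formula depends only on $\gcd(N,N_0)$ for a suitable $N_0$; since $\gcd(\gcd(N,N_0),N_0)=\gcd(N,N_0)$, this yields at once $\rho(N)=\rho(\gcd(N,N_0))$, where I write $\rho(N)=[K(R_N):K]/|R_NK^\times:K^\times|$. Letting $z_N$ denote the invariant $z$ attached to $R_N$, I would first record that the exponent $n(N)$ of $R_NK^\times/K^\times$ divides $N^c$, so its prime support is contained in that of $N$. By Remark~\ref{split} I decompose $\rho(N)=[K(\zeta_{z_N}):K]\cdot\prod_{p}\rho_p(N)$, where $\rho_p(N)$ is the local ratio over $K(\zeta_{z_N})$ coming from the $p$-part $(R_N)_p$. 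For odd $p$ not dividing $z_N$, and for $p=2$ when $\zeta_4\in K$, Kneser's Theorem~\ref{Kneserthm} (via Schinzel, exactly as in the proof of the odd case) gives $\rho_p(N)=1$; thus the only primes that matter are the entangling ones (dividing $z$) and, when $\zeta_4\notin K$ and $4\mid n$, the prime $2$.

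The technical heart is a lemma on how a fixed root of unity enters the tower. For $\eta$ lying in some $R_NK^\times$, the relation $R_N^M=R_{N/M}$ forces $S(\eta):=\{N:\eta\in R_NK^\times\}$ to be closed under $\gcd$: if $\eta\in R_{N_1}K^\times\cap R_{N_2}K^\times$ and $d=\gcd(N_1,N_2)$, then $\eta^{N_i/d}\in R_{N_i}^{N_i/d}K^\times=R_dK^\times$, and a B\'ezout combination of the coprime exponents $N_1/d,N_2/d$ recovers $\eta\in R_dK^\times$. Being closed under $\gcd$ and under multiples, $S(\eta)=\{N:M(\eta)\mid N\}$ with $M(\eta)=\gcd S(\eta)$. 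Moreover $M(\eta)$ is supported on the primes of $e:=\ord_{K^\times}(\eta)$: if $q\nmid e$ and $N=q^kN'\in S(\eta)$, then $\eta^{q^k}\in R_{N'}K^\times$ and, $q^k$ being invertible modulo $e$, already $\eta\in R_{N'}K^\times$, whence $q\nmid M(\eta)$. In particular an $\ell$-power root of unity has $M(\eta)$ an $\ell$-power, so whether it lies in $R_NK^\times$ depends only on $v_\ell(N)$.

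Now the two finiteness hypotheses enter. All roots of unity occurring in the formula lie in $W:=\mu_\infty(K(\zeta_{4z})^\times)$, finite modulo $\mu_\infty(K^\times)$ by assumption; hence only finitely many primes $\ell$—those in $\mathcal L:=\{\ell:\mu_{\ell^\infty}(K(\zeta_{4z})^\times)\neq\mu_{\ell^\infty}(K^\times)\}$, a set containing the entangling primes and, when $\zeta_4\notin K$, also $2$—can contribute a non-trivial factor, while for $\ell\notin\mathcal L$ every $\ell$-power root of unity of $K(\zeta_z)$ already lies in $K$, giving factor $1$. For $\ell\in\mathcal L$ the prime-to-$\ell$ power map induces an isomorphism $(R_N)_\ell K^\times/K^\times\cong R_{\ell^{v_\ell(N)}}K^\times/K^\times$ respecting orders and the field $K(\zeta_z)$, so the $\ell$-local factor depends only on $v_\ell(N)$ and on $z_N$; being bounded by the $\ell$-part of $|W:\mu_\infty(K^\times)|$ and monotone non-decreasing in $v_\ell(N)$, it is eventually constant. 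Choosing $k_\ell$ past stabilization, and large enough that $\gcd(N,N_0)$ detects, for each entangling $p$, whether $M(\zeta_p)=p^{a_p}$ divides $N$ (so that $z_N=z_{\gcd(N,N_0)}$), I set $N_0=\prod_{\ell\in\mathcal L}\ell^{k_\ell}$. Then every factor of $\rho(N)$ is a function of $\gcd(N,N_0)$, as required.

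The main obstacle is the prime $2$. Unlike the odd primes, its factor is not a single root-of-unity index: by Theorems~\ref{Ryb2} and~\ref{better2} and Lemma~\ref{lembetter2} it involves the term $2^{-\Delta}$, the auxiliary group $H'$ of Definition~\ref{Delta}, the parameters $w,w',m,\overline m,f$, the elements of $\sqrt{K^\times}$ (such as $1+\zeta_{2^w}$), and a five-way case distinction. The hypothesis that $|\mu_\infty(K(\zeta_{4z})^\times):\mu_\infty(K^\times)|$ be finite is used precisely here—the factor $4$ guaranteeing access to $\zeta_4$—to bound the $2$-power roots of unity and to make $w'$, and hence which case of Lemma~\ref{lembetter2} occurs, stabilize as $v_2(N)\to\infty$. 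I expect the delicate point to be verifying that the case label itself (not merely each case's value) becomes eventually constant in $v_2(N)$, so that the full $2$-factor is a function of $v_2(N)$ capped at $k_2$, i.e.\ of $\gcd(N,N_0)$.
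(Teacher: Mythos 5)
Your overall strategy coincides with the paper's: apply Theorem \ref{mainthm} to each $R_N$ and choose $N_0$ so that every term of the closed formula is unchanged when $N$ is replaced by $\gcd(N,N_0)$. Your gcd-closure lemma is correct (from $R_{N_i}^{N_i/d}=R_d$ and a B\'ezout combination one does get that $S(\eta)$ is the set of multiples of a single $M(\eta)$ supported on the primes dividing the order of $\eta$ modulo $K^\times$), and it is a clean way to handle what the paper does via $\mu_{p^{v_p(N)}}(R_NK^\times)=\mu_{p^{v_p(N)}}(R_{p^{v_p(N)}}K^\times)$; with it, the odd part of your argument --- the identification $z_N=z_{\gcd(N,N_0)}$, the reduction to the finitely many primes in your set of exceptional $\ell$, and the stabilization of the indices $|\mu_{\ell^{v}}(R_{\ell^{v}}K^\times)\cap K(\zeta_z)^\times:\mu_{\ell^{v}}(K^\times)|$, non-decreasing in $v$ and bounded by the finite quotient $|\mu_{\infty}(K(\zeta_{4z})^\times):\mu_{\infty}(K^\times)|$ --- goes through essentially as in the paper (note only that it is the \emph{index} that is non-decreasing; the local ratio itself is non-increasing).

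The genuine gap is the $2$-adic case, which you explicitly leave as an expectation (``I expect the delicate point to be\dots''), and this is precisely where the paper does its real work. Moreover you invoke the wrong hypothesis there: the finiteness of $|\mu_{\infty}(K(\zeta_{4z})^\times):\mu_{\infty}(K^\times)|$ controls only roots of unity, whereas the even case of Theorem \ref{mainthm} involves $R_NK^\times\cap\sqrt{K^\times}$, whose elements (such as $\sqrt{2}$ over $\Q$) are not roots of unity times elements of $K^\times$, so a priori the groups $T_k=(R_{2^k}K^\times\cap\sqrt{K^\times})/K^\times$ grow with $k$. The paper stabilizes these using the bound $|R_NK^\times:K^\times|\mid N^c$: squaring induces a surjection of $R_{2^k}K^\times/K^\times$ onto $R_{2^{k-1}}K^\times/K^\times$ with kernel $T_k$, so $\prod_{j\leq k}|T_j|\leq 2^{kc}$, forcing the non-decreasing chain $T_k$ to become constant. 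You also do not address the two remaining stabilizations: the factor $|\mu_{2^{f+1}}(R_NK^\times)(R_NK^\times\cap\sqrt{K^\times})\cap K(\zeta_z)^\times:K^\times|$, which the paper controls by observing that the squares of its elements lie in $\mu_{2^{v_2(N)}}(R_NK^\times)\cap K(\zeta_z)^\times$, already known to stabilize; and the term $2^{-\Delta}$ of Definition \ref{Delta} with the case distinction of Theorem \ref{better2} and Lemma \ref{lembetter2}, which the paper handles by taking $v_2(N_0)>w'$ and such that $1+\zeta_{2^{w'}}$ lies in $R_{2^{v_2(N_0)}}$ whenever it lies in some $R_{2^{v}}$. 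Ironically, your own lemma would settle this last point if you applied it beyond roots of unity: the class of $1+\zeta_{2^{w}}$ modulo $K(\zeta_z)^\times$ has $2$-power order (its square lies in $\zeta_{2^{w}}K(\zeta_z)^\times$), so $M(1+\zeta_{2^{w}})$ is a $2$-power and its membership depends only on $v_2(N)$; similarly the exponent $f(N)$ is non-decreasing in $v_2(N)$, so the trichotomy $w>f$, $w=f$, $w<f$ is eventually constant. As written, however, the proposal is a correct reduction whose crux --- the $2$-adic stabilization --- is asserted rather than proved.
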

\begin{proof}
Let $N_0$ be a number that is divisible by $4z$ and with the property that for every $N$ the group $\mu_{N}(R_N K^\times)\cap K(\zeta_{z})^\times$
is a subgroup of $\mu_{N_0}(R_{N_0} K^\times)$. 
Thus removing from $N$ the prime factors coprime to $N_0$ does not affect $\mu_{N}(R_N K^\times)\cap K(\zeta_{z})^\times$. Moreover, if $p$ is any prime number, we have  
$\mu_{p^{v_p(N)}}(R_N K^\times)=\mu_{p^{v_p(N)}}(R_{p^{v_p(N)}} K^\times)$. Combining these two observations we obtain  
$$\mu_{N}(R_N K^\times)\cap K(\zeta_{z})^\times=\mu_{\gcd(N,N_0)}(R_{\gcd(N,N_0)} K^\times)\cap K(\zeta_{z})^\times\,.$$
If $N$ is odd, we may conclude by Theorem \ref{mainthm}. So suppose that $N$ is even. Since  $R_N\cap\sqrt{K^\times}=R_{2^{v_2(N)}}\cap\sqrt{K^\times}$ and because of the bound on $|R_NK^\times:K^\times|$ we may define $N_0$ (such that $v_2(N_0)$ is large enough) so that 
$R_NK^\times\cap\sqrt{K^\times}=R_{\gcd(N,N_0)}K^\times\cap\sqrt{K^\times}$. 
Similarly, we may define $N_0$ such that the group
$$\mu_{2^{v_2(N)+1}}(R_N K^\times)(R_N K^\times\cap\sqrt{K^\times})\cap K(\zeta_z)^\times$$
does not change by replacing $N$ by $\gcd(N,N_0)$ (because the squares of its elements are in $\mu_{2^{v_2(N)}}(R_N K^\times)\cap K(\zeta_z)^\times$ which stabilizes when $v_2(N)$ is large enough). We may then conclude by Theorem \ref{mainthm} because, considering Definition \ref{Delta}, we may define $N_0$ such that $v_2(N_0)>w'$ (or we have $w'=\infty$) and such that $1+\zeta_{2^{w'}}$ is contained in $R_{2^{v_2(N_0)}}$ if it is contained in $R_{2^{v}}$ for some positive integer $v$.
\end{proof}

The following result is the reformulation in our setting of \cite[Theorem 1]{MAMA}:

\begin{thm}\label{MAMA}
Let $K$ be a number field, fix a finitely generated subgroup $\Gamma$ of $K^\times$ and for every positive integer $N$ let $R_N=\sqrt[N]{\Gamma}$. Then there exists a positive integer $N_0$ such that 
$$\frac{[K(R_N):K]}{|R_NK^\times:K^\times|}=\frac{[K(R_{\gcd(N,N_0)}):K]}{|R_{\gcd(N,N_0)}K^\times:K^\times|}\cdot \prod_{p\mid N, p\nmid N_0, \zeta_p \notin K^\times} \frac{p-1}{p}\,.$$
\end{thm}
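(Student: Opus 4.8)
The final statement, Theorem \ref{MAMA}, is described as ``the reformulation in our setting'' of a known result from the reference \cite{MAMA}. My plan is to prove it by deriving it as a consequence of the eventual-stabilisation result (Theorem \ref{eventual}) together with the explicit main formula (Theorem \ref{mainthm}), rather than re-proving the entanglement statement from \cite{MAMA} from scratch.

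\textbf{Setup and verification of hypotheses.} First I would check that the family $R_N=\sqrt[N]{\Gamma}$ satisfies the hypotheses of Theorem \ref{eventual}. Since $\Gamma$ is finitely generated, say of rank $r$ (as a group modulo torsion/roots of unity), the index $|R_NK^\times:K^\times|$ is bounded by $N^{r+1}$ or so, giving the polynomial bound with some constant $c$. The relation $R_N^M=R_{N/M}$ is immediate from the definition of $N$-th roots. The key finiteness hypotheses---that only finitely many primes $p$ have $\zeta_p\notin K^\times$ but $\zeta_p\in R_NK^\times$, and that $|\mu_\infty(K(\zeta_{4z})^\times):\mu_\infty(K^\times)|$ is finite---both hold because $K$ is a number field: $K(\zeta_{4z})$ contains only finitely many roots of unity, and $\zeta_p\in R_NK^\times=\sqrt[N]{\Gamma}K^\times$ forces $\zeta_p$ to be a radical of an element of the finitely generated group $\Gamma$, which can happen for only finitely many $p$. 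Thus Theorem \ref{eventual} applies and yields an $N_0$ beyond which the ratio depends only on $\gcd(N,N_0)$.

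\textbf{Identifying the correction factor.} The difference between Theorem \ref{MAMA} and Theorem \ref{eventual} is the extra product $\prod_{p\mid N,\, p\nmid N_0,\, \zeta_p\notin K^\times}\frac{p-1}{p}$. The plan is to account for exactly these primes. For a prime $p\mid N$ with $p\nmid N_0$, the local factor $G_p=R_N^{N/p^{v_p(N)}}$ contributes, via Remark \ref{split} and Corollary \ref{corbetterp}, a ratio of $1$ unless $\zeta_p\in R_NK^\times$ and $\zeta_p\notin K^\times$. For such a prime, since $p\nmid N_0$ and $N_0$ was chosen divisible by $z$, the prime $p$ does not divide $z$; hence by the way $z$ was defined in Theorem \ref{eventual} we must have $m_0=0$ in the notation of Corollary \ref{corbetterp} (the relevant roots of unity of order a power of $p$ are not already captured in $K(\zeta_z)$, and $\zeta_p\notin K(\zeta_z)$ because $p\nmid z$), so Corollary \ref{corbetterp} gives a local contribution $[K(\zeta_p):K]\cdot p^{-\min(m_0,m)}=[K(\zeta_p):K]=p-1$ in the degree against an index contribution carrying the full power of $p$. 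Carefully comparing the numerator and denominator contributions of such a prime to $\frac{[K(R_N):K]}{|R_NK^\times:K^\times|}$ yields precisely the factor $\frac{p-1}{p}$ relative to the stabilised quantity.

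\textbf{Assembling the formula.} Finally I would separate the primes dividing $N$ into those dividing $N_0$ (whose contribution is exactly the stabilised ratio $\frac{[K(R_{\gcd(N,N_0)}):K]}{|R_{\gcd(N,N_0)}K^\times:K^\times|}$ by Theorem \ref{eventual}) and those not dividing $N_0$. Among the latter, by the analysis above, each prime contributes a factor $1$ unless $\zeta_p\notin K^\times$ and $\zeta_p\in R_NK^\times$; but for $p\nmid N_0$ a sufficiently large $N_0$ forces $\zeta_p\in R_NK^\times$ to reduce (by the finiteness of the relevant primes, which are all collected into $z\mid N_0$) to the condition $\zeta_p\notin K^\times$ alone. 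Multiplying the stabilised factor by $\prod\frac{p-1}{p}$ over these primes gives the claimed identity.

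\textbf{Main obstacle.} I expect the delicate point to be the clean bookkeeping at the odd primes $p\nmid N_0$: one must argue that $\min(m_0,m)=0$ there, i.e.\ that no power of $\zeta_{p^{v_p(N)}}$ beyond $\zeta_p$ collapses into $K(\zeta_p)$ in a way that would alter the factor, and that the only surviving entanglement is the single relation $1+\zeta_p+\cdots+\zeta_p^{p-1}=0$ accounting for the degree drop from $p$ to $p-1$. This relies on $p\nmid N_0$ (so $p\nmid z$) ensuring $\zeta_p\notin K(\zeta_z)$ and on choosing $N_0$ large enough that all genuine entanglement among roots of unity has already stabilised; I would make this precise using Corollary \ref{corbetterp} and the definition of $z$ in Theorem \ref{eventual}.
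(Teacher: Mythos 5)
Your proposal has a fatal gap at the very first step: Theorem \ref{eventual} does \emph{not} apply to $R_N=\sqrt[N]{\Gamma}$, and your verification of its hypotheses is false. Since $1\in\Gamma$, we have $\mu_N\subseteq\sqrt[N]{\Gamma}=R_N$, so for \emph{every} prime $p$ the root of unity $\zeta_p$ lies in $R_pK^\times$; every root of unity is a radical of $1\in\Gamma$, so your claim that $\zeta_p\in R_NK^\times$ ``can happen for only finitely many $p$'' is wrong. As $K$ is a number field, $\zeta_p\notin K^\times$ for all but finitely many $p$, hence the set of primes in the finiteness hypothesis of Theorem \ref{eventual} is infinite and the integer $z$ there does not exist (it would have to grow with $N$). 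This is not a repairable technicality: if Theorem \ref{eventual} did apply, the ratio would literally stabilise and the correction factor in Theorem \ref{MAMA} would be $1$; the product $\prod_{p\mid N,\,p\nmid N_0,\,\zeta_p\notin K^\times}\frac{p-1}{p}$ exists \emph{precisely because} the hypothesis fails. Your subsequent appeals to ``$N_0$ was chosen divisible by $z$'' and ``the way $z$ was defined in Theorem \ref{eventual}'' therefore have no referent, and your claim in the last step that large $N_0$ ``forces $\zeta_p\in R_NK^\times$ to reduce to $\zeta_p\notin K^\times$ alone'' inverts the actual situation: $\zeta_p\in R_NK^\times$ holds automatically for $p\mid N$.

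The local bookkeeping is also off by one. In Corollary \ref{corbetterp}, $m_0$ is the largest \emph{positive} integer with $\zeta_{p^{m_0}}\in K(\zeta_p)^\times$, so $m_0\geq 1$ always, and since $\mu_{p^{v_p(N)}}\subseteq G_p$ here, also $m\geq 1$; your assertions that $m_0=0$ and (in the ``Main obstacle'' paragraph) that one must show $\min(m_0,m)=0$ are impossible by definition. The factor $\frac{p-1}{p}$ arises from $\min(m_0,m)=1$ \emph{together with} $[K(\zeta_p):K]=p-1$, and both conditions fail for finitely many exceptional primes (e.g.\ $\zeta_{p^2}\in K(\zeta_p)^\times$, or cyclotomic degeneracy $[K(\zeta_p):K]<p-1$, as for $K\supseteq\Q(\sqrt{5})$ and $p=5$); these primes must be forced into $N_0$, and one further needs linear disjointness across primes to multiply local contributions, which you never address. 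This is exactly how the paper proceeds: it chooses an odd squarefree $Z$ with $[K(\zeta_p):K]=p-1$ for all $p\nmid Z$ and with the extensions $K(R_{p^{v_p(N)}},\zeta_Z)/K(\zeta_Z)$ linearly disjoint, handles the $2$-part via Lemma \ref{lembetter2}, and then follows the \emph{proof} (not the statement) of Theorem \ref{eventual} only for the $N$ dividing a power of $Z$. Your division of primes into $p\mid N_0$ versus $p\nmid N_0$ is the right shape, but as written the argument rests on an inapplicable theorem and an impossible value of $m_0$.
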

\begin{proof}
There is an odd squarefree integer $Z$ such that for all primes $p\nmid Z$ we have $[K(\zeta_p):K]=p-1$.
Additionally, we can choose $Z$ such that for any $N\geq 1$ the extensions $K(R_{p^{v_p(N)}}, \zeta_Z)/K(\zeta_Z)$ are linearly disjoint for every prime number $p$. Thus 
for any odd squarefree integer $Z'$ that is a multiple of $Z$ and for every positive integer $N$ we have 
$$\mu_{2^{v_2(N)+1}}(R_N K^\times)(R_N K^\times\cap\sqrt{K^\times})\cap K(\zeta_{Z'})^\times \subseteq R_{2^{v_2(N)}} K^\times\cap K(\zeta_{Z'})^\times \subseteq  K(\zeta_{Z})^\times\,.$$
By Lemma \ref{lembetter2} we may choose the $2$-adic valuation of $N_0$ to be large enough such that 
$$\frac{[K(R_{2^{v_2(N)}}, \zeta_Z):K(\zeta_Z)]}{|R_{2^{v_2(N)}}K(\zeta_Z)^\times:K^\times|}=\frac{[K(R_{2^{\min (v_2(N), v_2(N_0))}}, \zeta_Z):K(\zeta_Z)]}{|R_{2^{\min (v_2(N), v_2(N_0))}}K(\zeta_Z)^\times:K^\times|}\,.$$
Then, following the proof of Theorem \ref{mainthm}, we are left to control those $N$ which divide a power of $Z$, and for them we can find a suitable $N_0$ following the proof of Theorem \ref{eventual}.
\end{proof}


\begin{thebibliography}{0}

\bibitem{Albu} \textsc{Albu,\@ T.}, \emph{Cogalois theory}, Pure and Applied Mathematics 252, Marcel Dekker, New York, 2003. 

\bibitem{BarreraVelez} \textsc{Barrera Mora,\@ F. and Vélez,\@ W.~Y.} \emph{Some results on radical extensions}, J. Algebra \textbf{162} (1993) no.~2, 295--301. 

\bibitem{EntRad} \textsc{Chan,\@ C.W., Pajaziti,\@ A., Perucca,\@ A.  and Perissinotto,\@ F.} \emph{The entanglement of radicals}, arXiv preprint \url{https://arxiv.org/abs/2508.19211}.


\bibitem{Koch1} \textsc{Halter-Koch,\@ F.}, \emph{Eine Galoiskorrespondenz für Radikalerweiterungen (A Galois correspondence for radical extensions)}, J.Algebra \textbf{63} (1980), 318--330.

\bibitem{Koch2} \textsc{Halter-Koch,\@ F.}, \emph{Über Radikalerweiterungen (On radical extensions)}, Acta Arith. \textbf{36} (1980), 43--58. 

\bibitem{Kneser}
\textsc{Kneser,\@ M.}, \emph{Lineare Abhängigkeit von Wurzeln (Linear dependence of roots)}, 
Acta Arith. \textbf{26} (1975), 307--308. 

\bibitem{LangAlgebra}
\textsc{Lang,\@ S.} \emph{Algebra}, Graduate Texts in Mathematics 211, Springer-Verlag, New York, 2002.

\bibitem{Lenstra} 
\textsc{Lenstra,\@ H.\@ W.\@ jr.}, \emph{Entangled radicals}, Colloquium Lectures, AMS 112th Annual Meeting, San Antonio, January 12--15, 2006, available at \url{https://www.math.leidenuniv.nl/~hwl/papers/rad.pdf}.

\bibitem{LMS} 
\textsc{Lenstra,\@ H.\@ W.\@ jr., Stevenhagen,\@ P. and Moree,\@ P.}, \emph{Character sums for primitive root densities.}
Math. Proc. Cambridge Philos. Soc. \textbf{157} (2014), no. 3, 489--511.

\bibitem{Palenstijn} \textsc{Palenstijn,\@ W.\@ J.}, \emph{Radicals in arithmetic}, PhD thesis, University of Leiden (2014), available at \url{https://openaccess.leidenuniv.nl/handle/1887/25833}.

\bibitem{MAMA} \textsc{Perucca,\@ A., Sgobba,\@ P. and Tronto,\@ S.}, \emph{Kummer theory for number fields via entanglement groups}, Manuscripta Math., \textbf{169} (2022), no.~1-2, 251--270.

\bibitem{Rybowicz}
\textsc{Rybowicz,\@ M.}, \emph{On the normalization of numbers and functions
defined by radicals}, J. Symbolic Comput., 
\textbf{35} (2003), 651--672.

\bibitem{Schinzel-ab}
\textsc{Schinzel,\@ A.}, \emph{Abelian binomials, power residues and exponential congruences}, Acta Arith. \textbf{32} (1977) no.~3, 245--274.
Addendum, ibid. \textbf{36} (1980), 101--104. See also Andrzej Schinzel Selecta Vol.II, European Mathematical Society, Z{\"u}rich, 2007, 939--970.

\end{thebibliography}
\end{document}